\crefname{enumi}{}{}
\Crefname{enumi}{}{}
\tikzset{negated/.style={
        decoration={markings,
            mark= at position 0.5 with {
                \node[transform shape] (tempnode) {$\backslash$};
            }
        },
        postaction={decorate}
    }
}
\theoremstyle{plain}
\declaretheorem[name={Theorem}, numberwithin=section]{thm}
\declaretheorem[name={Lemma}, sibling=thm]{lemma}
\declaretheorem[name={Proposition}, sibling=thm]{proposition}
\declaretheorem[name={Claim}, numberwithin=thm]{claim}
\declaretheorem[name={Corollary}, sibling=thm]{corollary}
\theoremstyle{definition}
\declaretheorem[name={Definition},numberwithin=section]{definition}
\DeclareMathOperator{\Fin}{Fin}
\DeclareMathOperator{\Inf}{Inf}
\DeclareMathOperator{\biemb}{\approx}
\DeclareMathOperator{\enters}{\searrow}
\newcommand{\A}{\mathcal{A}}
\newcommand{\B}{\mathcal{B}}
\newcommand{\C}{\mathcal{C}}
\newcommand{\E}{\mathcal{E}}
\newcommand{\LR}{\Leftrightarrow}
\newcommand{\ra}{\rightarrow}
\newcommand{\Ra}{\Rightarrow}
\newcommand{\La}{\Leftarrow}
\newcommand{\tdeg}[1]{\mathbf{#1}}
\newcommand{\embeds}{\hookrightarrow}
\newcommand{\ol}[1]{\overline{#1}}
\newcommand{\jhalt}{W^{\emptyset'}}
\newcommand{\cotwo}{\ol{\emptyset''}}
\renewcommand{\phi}{\varphi}
\setlist*[enumerate,1]{
 label=(\arabic*),
}
\newlist{inlinelist}{enumerate*}{1}
\setlist*[inlinelist,1]{
label=(\arabic*),
}
\title{Degrees of bi-embeddable categoricity of equivalence structures}
\subjclass[2010]{03C57}
\author[Bazhenov]{Nikolay Bazhenov}
\address{
Sobolev Institute of Mathematics, Novosibirsk, Russia\\
Novosibirsk State University, Novosibirsk, Russia}
\email{bazhenov@math.nsc.ru}
\urladdr{bazhenov.droppages.com}
\author[Fokina]{Ekaterina Fokina}
\address{Institute of Discrete Mathematics and Geometry, Vienna University of Technology, Austria}
\email{ekaterina.fokina@tuwien.ac.at}
\urladdr{dmg.tuwien.ac.at/fokina}
\author[Rossegger]{Dino Rossegger}
\address{Institute of Discrete Mathematics and Geometry, Vienna University of Technology, Austria}
\email{dino.rossegger@tuwien.ac.at}
\urladdr{dmg.tuwien.ac.at/rossegger}
\author[San Mauro]{Luca San Mauro}
\address{Institute of Discrete Mathematics and Geometry, Vienna University of Technology, Austria}
\email{luca.san.mauro@tuwien.ac.at}
\thanks{The first author was supported by RFBR, according to the research project No. 16-31-60058 mol\_a\_dk. The second, third and fourth author were supported by the Austrian Science Fund FWF through project~P~27527.
}
\begin{document}
\begin{abstract}
We study the algorithmic complexity of embeddings between bi-embeddable equivalence structures. We define the notions of computable bi-embeddable categoricity, (relative) $\Delta^0_\alpha$ bi-embeddable categoricity, and degrees of bi-em\-bed\-dable categoricity. These notions mirror the classical notions used to study the complexity of isomorphisms between structures. We show that the notions of $\Delta^0_\alpha$ bi-embeddable categoricity and relative $\Delta^0_\alpha$ bi-embeddable categoricity coincide for equivalence structures for $\alpha=1,2,3$. We also prove that computable equivalence structures have degree of bi-embeddable categoricity $\tdeg{0},\tdeg{0}'$, or $\tdeg{0}''$. We obtain results on index sets of computable equivalence structure with respect to bi-embeddability.
\end{abstract}
\maketitle
\section{Introduction}
The systematic study of the complexity of isomorphisms between computable copies of structures was initiated in the 1950's by Fr\"ohlich and Shepherdson~\cite{frohlich_effective_1956} and independently by Maltsev~\cite{maltsev_recursive_1962}.
The notions of computable categoricity (in the Russian tradition also called autostability) and relative computable categoricity are probably the most prominent in this line of research. A computable structure $\A$ is \emph{computably categorical} if for every computable copy $\B$ there is a computable isomorphism from $\B$ to $\A$. A structure $\A$ is \emph{relatively computably categorical} if for every copy $\B$ of $\A$ there is a $\deg(\A\oplus \B)$-computable isomorphism from $\B$ to $\A$.
For a survey of this topic see Fokina, Harizanov, and Melnikov~\cite{fokina_computable_2014}.

In this article we study the algorithmic complexity of embeddings between bi-em\-bed\-dable equivalence structures. The relation of bi-embeddability has attracted a lot of attention of specialists in computable structure theory and descriptive set theory in the recent years. Two structures $\A$ and $\B$ are \emph{bi-embeddable} if there is an embedding of $\A$ in $\B$ and \emph{vice versa}. In the literature $\A$ and $\B$ are sometimes called equimorphic~\cite{montalban_up_2005,montalban_equimorphism_2007,greenberg_ranked_2008}. Montalb\'an~\cite{montalban_up_2005} showed that any hyperarithmetic linear ordering is bi-embeddable with a computable one and together with Greenberg~\cite{greenberg_ranked_2008} they showed the same for hyperarithmetic Boolean algebras, trees, compact metric spaces, and Abelian p-groups. Fokina, Rossegger, and San Mauro~\cite{fokina_bi-embeddability_2017} studied degree spectra with respect to the bi-embeddability relation and noticed that any countable equivalence structure is bi-embeddable with a computable one.
For this reason, the study of the algorithmic complexity of embeddings is particularly interesting for this class of structures.

In this paper we focus on computability-theoretic properties of equivalence structures. An equivalence structure $\A=(A,E)$ has a computable subset $A$ of the natural numbers as its universe and is equipped with  an equivalence relation $E$ on $A$.
We identify such a structure $\A$  with its atomic diagram, which is the set of atomic formulas and negations of atomic formulas with parameters from $A$ that are true of $\A$ under a fixed G\"odel numbering. Our computability theoretic notions are standard and as in Soare~\cite{soare_turing_2016}.

Calvert, Cenzer, Harizanov, and Morozov~\cite{calvert_effective_2006}
initiated the study of computable categoricity for equivalence structures. Given a structure $\A$ and a structure $\B$ bi-embeddable with $\A$, we say that $\B$ is a \emph{bi-embeddable copy} of $\A$. We study the complexity of embeddings through the following notions analogous to computable categoricity and relative computable categoricity.

\begin{definition}\label{def:bicat}\leavevmode
\begin{itemize}
  \item A computable structure $\A$ is \emph{$\tdeg{d}$-computably bi-embeddably categorical} if any computable bi-embeddable copy of $\A$ is bi-embeddable with $\A$ by $\tdeg{d}$-computable embeddings. Here $\tdeg{d}$ is a Turing degree. For the case $\tdeg{d}=\tdeg{0}^{(n)}$, we usually use the term ``\emph{$\Delta^0_{n+1}$ bi-embeddably categorical}'' instead.
  \item A countable (not necessarily computable) structure $\A$ is \emph{relatively $\Delta^0_n$ bi-embeddably categorical} if for any bi-embeddable copy $\B$, $\A$ and $\B$ are bi-embeddable by  $\Delta^{\A \oplus \B}_n$ embeddings. A computable structure is relatively computably bi-embeddably categorical if $n=1$.
  \end{itemize}
   The definitions can be extended to arbitrary hyperarithmetic $\alpha$ in the usual way. In this paper we only focus on the cases of $n=0,1,2$.
\end{definition}

Fokina, Kalimullin, and R. Miller~\cite{fokina_degrees_2010} introduced the notion of a (strong) degree of categoricity of a structure. Given a computable structure $\A$ that is $\tdeg{d}$-computably categorical in the classical sense, the \emph{degree of categoricity} of $\A$ is the least degree that computes an isomorphism between any two computable isomorphic copies of $\A$. It is the \emph{strong degree of categoricity} of $\A$ if there are two computable copies $\A_0$ and $\A_1$ such that for any $f:\A_0\cong \A_1$, $f\geq_T \tdeg{d}$. Note that $\A$ might not have a (strong) degree of categoricity \cite{miller_d-computable_2009,fokina_categoricity_2016}. A Turing degree $\tdeg{d}$ is a (strong) degree of categoricity if there exists a structure having $\tdeg{d}$ as its (strong) degree of categoricity. This notion has seen a lot of interest over the last years. Fokina, Kalimullin, and R.\ Miller~\cite{fokina_degrees_2010} showed that all strong degrees of categoricity are hyperarithmetical. Csima, Franklin, and Shore~\cite{csima_degrees_2013} extended this result by showing that all degrees of categoricity are hyperarithmetical. R.\ Miller~\cite{miller_d-computable_2009} exhibited a field that does not have a degree of categoricity, and Fokina, Frolov, and Kalimullin~\cite{fokina_categoricity_2016} gave an example of a rigid structure without degree of categoricity. Recently, Csima and Stephenson~\cite{csima_finite_2017}, and independently, Bazhenov, Kalimullin, and Yamaleev~\cite{bazhenov_degrees_2016,bazhenov_degrees_????} found examples of structures that have degree of categoricity but no strong degree of categoricity. The question whether there exists a degree of categoricity that is not strong is still open. We give an analogous definition for bi-embeddability.
\begin{definition}
  The \emph{degree of bi-embeddable categoricity} of a computable structure $\A$ is the least Turing degree $\tdeg{d}$ that, if it exists, computes embeddings between any computable bi-embeddable copies of $\A$.
  If, in addition, $\A$ has two computable bi-embeddable copies $\A_0, \A_1$ such that for all embeddings $\mu: \A_0\embeds \A_1$, $\nu: \A_1\embeds \A_0$, $\mu\oplus \nu \geq_T \tdeg{d}$, then $\tdeg{d}$ is the \emph{strong degree of bi-embeddable categoricity} of $\A$.
\end{definition}
Csima and Ng~\cite{unpublished} showed that a computable equivalence structure has strong degree of categoricity $\tdeg{0}$, $\tdeg{0}'$, or $\tdeg{0}''$.
Our main result reflects theirs in our setting.
\begin{thm}\label{thm:dgcat}
  Let $\A$ be a computable equivalence structure.
  \begin{enumerate}
    \item\label{item:thmdgcat_1} If $\A$ has bounded character and finitely many infinite equivalence classes, then its degree of bi-embeddable categoricity is $\tdeg{0}$.
    \item\label{item:thmdgcat_2} If $\A$ has unbounded character and finitely many infinite equivalence classes, then its degree of bi-embeddable categoricity is $\tdeg{0}'$.
    \item\label{item:thmdgcat_3} If $\A$ has infinitely many infinite equivalence classes, then its degree of bi-embeddable categoricity is $\tdeg{0}''$.
  \end{enumerate}
  Thus, the degree of bi-embeddable categoricity of equivalence structures is either $\tdeg{0}$, $\tdeg{0}'$, or~$\tdeg{0}''$. Furthermore, the degrees of bi-embeddable categoricity of equivalence structures are strong.
\end{thm}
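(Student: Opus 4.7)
The plan is to establish, for each of the three cases, both a matching upper bound on the degree needed to compute embeddings and a strong lower bound via two specific computable bi-embeddable copies that force the claimed degree. The common structural fact I rely on is that two equivalence structures are bi-embeddable precisely when, for each $k\in\set{1,2,\ldots}\cup\set{\infty}$, they have the same number of equivalence classes of size at least $k$. Hence building an embedding $\A_0\embeds\A_1$ reduces to producing an injection $\phi$ from classes of $\A_0$ to classes of $\A_1$ with $|C|\leq|\phi(C)|$, together with injective maps within each paired class.

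For case (1), bounded character plus finitely many infinite classes forces bi-embeddability to coincide with isomorphism, since the multiplicity of classes of each exact size is recovered from the profile of ``at least $k$'' counts by finite differences. By the computable categoricity criterion of Calvert, Cenzer, Harizanov and Morozov, any two computable bi-embeddable copies are computably isomorphic, hence admit computable embeddings in both directions. The strong lower bound $\tdeg{0}$ is automatic.

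For case (2), I build embeddings with $\emptyset'$: the exact size of each finite class is $\Delta^0_2$, and because there are only finitely many infinite classes (a bi-embeddability invariant fixed by $\A$), a $\tdeg{0}'$-computable approximation eventually pins down which classes on each side are infinite. With this information I greedily match classes of $\A_0$ in order of least representative to ever-larger unused target classes in $\A_1$, using unbounded character to produce witnesses of each needed size on demand. For the strong lower bound I construct $\A_0,\A_1$ whose classes come in matched pairs $(C_i^0, C_i^1)$ such that which side carries the strictly larger finite class is driven by a $\emptyset'$-complete enumeration; any embedding pair must respect the size inequality within each pair, so reading off these choices recovers $\emptyset'$.

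For case (3), the obstacle moves up a quantifier: the predicate ``$C$ is infinite'' is genuinely $\Pi^0_2$, so I invoke $\emptyset''$ to decide the size (finite value or infinite) of every class and then perform the same greedy matching. The lower bound codes a $\Pi^0_2$-complete predicate into pairs of classes where exactly one side is infinite, with which side is infinite tracking the predicate. The principal difficulty in cases (2) and (3) is achieving \emph{strength}: every legal pair of embeddings, not merely the intended one, must compute the claimed degree. This is handled by arranging each coded bit to be witnessed by a size inequality that any legal embedding is forced to observe, together with bookkeeping that prevents pathological class permutations from defeating the code.
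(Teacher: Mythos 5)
Your global criterion for bi-embeddability (equality of the counts of classes of size at least $k$, for all $k$ including $\infty$) is correct, and your upper bounds are essentially the paper's: computable embeddings in case (1), $\Delta^0_2$ via sizes of finite classes plus a non-uniformly fixed transversal of the finitely many infinite classes in case (2), and $\Delta^0_3$ via deciding the $\Pi^0_2$ predicate $\Inf$ in case (3). But your case (1) contains a genuine error. Bi-embeddability does \emph{not} coincide with isomorphism for bounded structures with finitely many infinite classes: take $\A$ with infinitely many classes of size $1$ and infinitely many of size $2$, and $\B$ with infinitely many classes of size $2$ only. Both counts of classes of size $\geq k$ are $\aleph_0$ for $k=1,2$, so your ``finite differences'' do not recover the exact multiplicities, and indeed $\A\biemb\B$ while $\A\not\cong\B$. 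Moreover, even on genuinely isomorphic copies the Calvert--Cenzer--Harizanov--Morozov criterion does not help you: the structure $\A$ above is not computably categorical (two finite sizes occur infinitely often), yet it \emph{is} computably bi-embeddably categorical. The paper's proof of this case is a direct construction: fix $l$, the largest size occurring infinitely often, note that the part above $l$ consists of finitely many classes (so its isomorphism type is a bi-embeddability invariant and can be matched non-uniformly), and send all remaining classes into the c.e.\ transversal of classes of size exactly $l$ in the target. Your route cannot be repaired by citing computable categoricity.

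The lower bounds in cases (2) and (3) are also not established by your ``matched pairs'' coding, and the difficulty you flag at the end is precisely where the argument breaks: an embedding is free to send the class $C_i^0$ to \emph{any} sufficiently large class of $\A_1$, so no positional pairing is observed by a legal embedding, and if the sizes on both sides were readable by the decoder the coded bit would be computable and code nothing. The paper's constructions are designed exactly to be permutation-proof. For case (2) it takes a limitwise monotonic $f$ dominating every partial computable function, builds $\A_f$ with $|[\langle x,0\rangle]^{\A_f}|=f(x)$ and a target $\A$ with \emph{computable} size function; then for any embedding $\nu$ the function $g(x)=|[\nu(\langle x,0\rangle)]^{\A}|$ is computable from $\nu$, majorizes $f$, hence still dominates all partial computable functions, and the domination theorem gives $\nu\geq_T\tdeg{0}'$ --- robustness comes from the fact that domination is preserved under majorization, not from identifying targets. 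For case (3) it builds $\A$ so that \emph{every} infinite partial transversal of $\Inf^\A$ computes $\tdeg{0}''$ (the indices of the infinite classes code initial segments of $\ol{\emptyset''}$, via a $\tdeg{0}'$-immunity-style construction); since any embedding of an all-infinite-classes structure must land in $\Inf^\A$, whatever classes it hits suffice to decode. You need an idea of this ``any witness decodes'' type; ``bookkeeping that prevents pathological class permutations'' is not an argument, and no arrangement of pairwise size inequalities can force an embedding into prescribed targets in an unbounded structure.
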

The proof of \cref{thm:dgcat} combines various theorems proved in \cref{sec:cateq,sec:relcat}. In these sections we also obtain results on the relation between classical notions of categoricity and bi-embeddable categoricity, summarized in \cref{fig:relations}.
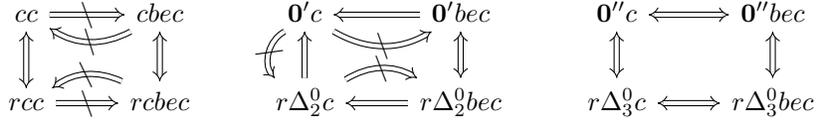
\begin{figure}[h]
\center
\begin{tikzcd}[arrows=Rightarrow]
  cc \arrow[negated]{r}\arrow[Leftrightarrow]{d} & cbec\arrow[negated, bend left=30]{l}\arrow[Leftrightarrow]{d} &  \tdeg{0}'c\arrow[ negated, bend right=30]{r}\arrow[negated, bend right=50]{d} &\arrow{l}\arrow[Leftrightarrow]{d} \tdeg{0}'bec & \tdeg{0}''c \arrow[Leftrightarrow]{r}\arrow[Leftrightarrow]{d} & \tdeg{0}''bec\arrow[Leftrightarrow]{d} \\
  rcc\arrow[negated]{r} & \arrow[negated, bend right=30]{l} rcbec &  r\Delta^0_2c\arrow{u} \arrow[ negated, bend left=30]{r}& \arrow{l}r\Delta^0_2bec & r\Delta^0_3c\arrow[Leftrightarrow]{r} & r\Delta^0_3bec

\end{tikzcd}
\caption{\label{fig:relations}Relations between categoricity for computable equivalence structures}
\end{figure}

In \cref{sec:cateq} we characterize the computably bi-embeddably categorical equivalence structures. In \cref{sec:relcat} we study $\Delta^0_2$ and $\Delta^0_3$ bi-embeddably categorical and relatively $\Delta^0_2$ and $\Delta^0_3$ bi-embeddably categorical equivalence structure. We show that all equivalence structures are relatively $\Delta^0_3$ categorical. We prove \cref{item:thmdgcat_2} and \cref{item:thmdgcat_3} of \cref{thm:dgcat} and study the relations between those notions summarized in \cref{fig:relations}.

In \cref{sec:index} we obtain results on the complexity of the index sets of equivalence structures with degrees of bi-embeddable categoricity $\tdeg{0},\tdeg{0}'$, and $\tdeg{0}''$.
 \section{Computable bi-embeddable categoricity}\label{sec:cateq}Given an equivalence structure $\A$ and $a\in A$ we write $[a]^\A$ for the equivalence class of~$a$; if it is clear from the context which structure is meant, we omit the superscript. The following notions are central to our analysis.
\begin{definition}
  Let $\A$ be an equivalence structure. A set $T\subseteq A$ is a \emph{transversal} of $\A$ if
  \begin{enumerate}
    \item for $x,y\in T$, if $x\neq y$, then $x\not \in [y]^\A$,
    \item and $A=\bigcup_{x\in T} [x]^\A$.
  \end{enumerate}
\end{definition}
\begin{proposition}
  Let $\A$ be an equivalence structure, then there is a transversal $T$ of $\A$ such that $T\leq_T \A$.
\end{proposition}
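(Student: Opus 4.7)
The plan is a straightforward greedy construction relative to $\A$. Since $A$ is a computable subset of $\mathbb{N}$, I can $\A$-computably enumerate $A$ in increasing order as $a_0 < a_1 < a_2 < \cdots$. I will build $T$ in stages $T_0 \subseteq T_1 \subseteq \cdots$, where at stage $s+1$ I put $a_s$ into $T_{s+1}$ if and only if $(a_s, x) \notin E$ for every $x \in T_s$; otherwise I set $T_{s+1} = T_s$. Then $T := \bigcup_s T_s$.

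To see that $T$ is a transversal, note first that by construction any two distinct elements of $T$ are $E$-inequivalent, giving condition~(1). For condition~(2), fix $a \in A$; then $a = a_s$ for some stage $s$. Either $a_s$ was added to $T$ at stage $s+1$, in which case $a \in [a]^\A$ with $a \in T$; or it was not added, which by construction means there is some $x \in T_s \subseteq T$ with $(a_s, x) \in E$, so $a \in [x]^\A$. In either case $a$ lies in some class represented in $T$.

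For the computability claim, I argue by induction on the enumeration that the finite set $T_s$ is uniformly $\A$-computable in $s$: given $T_s$, one asks $\A$ finitely many atomic questions of the form ``$(a_s, x) \in E$?'' for $x \in T_s$ to determine $T_{s+1}$. To decide membership of an arbitrary $n \in \mathbb{N}$ in $T$, one first checks $n \in A$ using $\A$; if so, one locates the stage $s$ with $n = a_s$ and outputs whether $n \in T_{s+1}$. Hence $T \leq_T \A$.

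There is essentially no obstacle: the construction is a textbook greedy recursion, and the only point requiring a line of care is that each stage uses only finitely many oracle queries so that the induction produces an $\A$-computable characteristic function.
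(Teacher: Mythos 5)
Your construction is correct and is essentially the paper's proof in expanded form: adding $a_s$ to $T$ exactly when it is inequivalent to all earlier elements means $T$ is precisely the set of least elements of the equivalence classes, which is what the paper selects, and the finitely-many-oracle-queries observation justifies $T \leq_T \A$ just as the paper intends.
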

\begin{proof}
 For each equivalence class, we choose the least element in the class. We can do this computably in (the atomic diagram of) $\A$.
\end{proof}
\begin{definition}[Calvert, Cenzer, Harizanov, and Morozov~\cite{calvert_effective_2006}]
  Let $\A$ be an equivalence structure.
  \begin{enumerate}
    \item We say that $\A$ \emph{has bounded character}, or simply  \emph{is bounded}, if there is some finite $k$ such that all finite equivalence classes of $\A$ have size at most $k$. If $\A$ has bound $k$ on the sizes of its finite equivalence classes, we say that $\A$ is \emph{$k$-bounded}.
    \item \hfil $Inf^\A=\{a\in A: [a]^\A \text{ is infinite}\}\qquad Fin^\A=\{a\in A:[a]^\A \text{ is finite}\}$\hfill
  \end{enumerate}
\end{definition}
We will use the following
relativization of~\cite[Lemma 2.2]{calvert_effective_2006}.
\begin{lemma}\label{lem:compprop}
  Let $\A$ be an equivalence structure, then
  \begin{enumerate}
    \item\label{it:compprop1} For $k\in \omega$, $|[a]^\A|\leq k$ is $\Pi_1^\A$, $|[a]^\A| \geq k$ is $\Sigma_1^\A$, $|[a]^\A|=k$ is $\Delta^\A_2$,
    \item\label{it:compprop2} $Inf^\A$ is $\Pi^\A_2$, and $Fin^\A$ is $\Sigma^\A_2$,
  \end{enumerate}
\end{lemma}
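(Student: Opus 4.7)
The plan is to verify each bound by writing out explicit quantifier prefixes over the atomic diagram of $\A$, using that $E$ (and equality) are computable relative to $\A$. This is essentially the relativization of the argument in~\cite{calvert_effective_2006}, so I would present it briefly and appeal to standard closure properties of the arithmetical hierarchy.

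For part~\eqref{it:compprop1}, I would observe that
\[
|[a]^\A| \geq k \iff \exists x_1,\ldots,x_k \Bigl( \bigwedge_{i<j\leq k} x_i \neq x_j \;\wedge\; \bigwedge_{i \leq k} E(x_i,a) \Bigr),
\]
which is $\Sigma_1^\A$ since the matrix is computable in $\A$. Taking the negation,
\[
|[a]^\A| \leq k \iff \forall x_1,\ldots,x_{k+1} \Bigl( \bigvee_{i<j\leq k+1} x_i = x_j \;\vee\; \bigvee_{i \leq k+1} \neg E(x_i,a) \Bigr),
\]
so this predicate is $\Pi_1^\A$. Then $|[a]^\A| = k$ is the conjunction of a $\Sigma_1^\A$ and a $\Pi_1^\A$ formula, hence $\Delta_2^\A$.

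For part~\eqref{it:compprop2}, I would note that $a \in Inf^\A$ iff $[a]^\A$ has size at least $k$ for every $k$, i.e.
\[
a \in Inf^\A \iff \forall k \,\bigl( |[a]^\A| \geq k \bigr),
\]
which is a universal quantifier in front of a $\Sigma_1^\A$ predicate (uniform in $k$), hence $\Pi_2^\A$. Since $Fin^\A = A \setminus Inf^\A$ and $A$ is computable, $Fin^\A$ is $\Sigma_2^\A$.

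There is no real obstacle: the proof is purely a quantifier count, and the only thing to watch is that equality and $E$ are computable in $\A$ so the matrices above are indeed $\Delta_0^\A$. If space permits I would simply remark that this is the relativization of \cite[Lemma~2.2]{calvert_effective_2006} and omit the calculation.
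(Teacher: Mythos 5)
Your proposal is correct and follows essentially the same route as the paper: explicit $\Sigma_1^\A$ and $\Pi_1^\A$ formulas counting witnesses for the class sizes, the conjunction for $|[a]^\A|=k$, the universal quantification $\forall k\,(|[a]^\A|\geq k)$ for $Inf^\A$, and complementation for $Fin^\A$. The only cosmetic difference is that you obtain the $\Pi_1^\A$ bound for $|[a]^\A|\leq k$ by negating the $\Sigma_1^\A$ formula for $|[a]^\A|\geq k+1$, whereas the paper writes the universal formula directly; the two are logically equivalent.
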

\begin{proof}
  Ad \cref{it:compprop1}. A $\Pi_1^\A$ definition for $|[a]^\A|\leq k$ is
\[|[a]^\A|\leq k \LR \forall x_1,\dots, x_{k+1} \ \bigwedge_{1\leq i \leq k+1} x_i E a \ra \bigvee_{1\leq i<j\leq k+1} x_i=x_j,\]
  a $\Sigma_1^\A$ definition for $|[a]^\A|\geq k$ is
\[|[a]^\A|\geq k \LR \exists x_1,\dots, x_{k} \ \bigwedge_{1\leq i\leq k} x_i E a \land \bigwedge_{1\leq i<j\leq k} x_i\neq x_j,\]
  and a $\Delta^\A_2$ definition for $|[a]^\A|=k$ is then just the conjunction of $|[a]^\A|\leq k$ and $|[a]^\A|\geq k$.

  Ad \cref{it:compprop2}. The property $a\in Inf^\A$ has a $\Pi^\A_2$ definition by $\forall k\ |[a]^\A|\geq k$. It follows immediately that $a\in Fin^\A$ has a $\Sigma^\A_2$ definition.
\end{proof}
Our first goal is to characterize computably bi-embeddably categorical equivalence structures.
In~\cite{calvert_effective_2006} the following characterization of computably categorical equivalence structures was given.
\begin{thm}[Calvert, Cenzer, Harizanov, and Morozov]\label{thm:eqcatchar}
  Let $\A$ be a computable equivalence structure, then $\A$ is computably categorical if and only if
  \begin{enumerate}
    \item $\A$ has finitely many finite equivalence classes,
    \item\label{item:eqcatchar2} or $\A$ has finitely many infinite classes, bounded character, and at most one finite $k$ such that there are infinitely many classes of size $k$.
  \end{enumerate}
\end{thm}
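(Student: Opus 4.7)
The plan is to handle the two implications separately: sufficiency by explicit construction of isomorphisms, necessity by diagonalization.

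For sufficiency under condition (1), $\A$ has finitely many finite classes, so some $N$ bounds all their sizes. Given a computable copy $\B$, I would build an isomorphism $f\colon\A\to\B$ by back-and-forth in stages. At each stage we handle the next unmatched element $a\in A$ by waiting until either $|[a]^\A|$ exceeds $N$ (in which case $[a]^\A$ is infinite and we match $a$ into a fresh infinite class of $\B$) or $[a]^\A$ has been enumerated and is visibly complete (in which case we match it to a finite class of the same size in $\B$). Since only finitely many finite classes exist, the finite portion of the diagram is resolved after some finite stage, after which all new elements are known to belong to infinite classes and can be matched freely. For sufficiency under condition (2), bounded character combined with \cref{lem:compprop} makes the class-size function $\A$-computable by a bounded search: checking $|[a]^\A|=m$ reduces to a search of size $k+1$, where $k$ is the character. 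With exact sizes in hand, I would identify the finitely many infinite classes, the finitely many classes of each non-abundant finite size, and the classes of the single abundant size $k^\ast$, and pair them one-to-one by a straightforward back-and-forth.

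For necessity, suppose that neither (1) nor (2) holds. Then $\A$ has infinitely many finite classes together with at least one of: (a) infinitely many infinite classes; (b) unbounded character; or (c) two finite sizes $k_1<k_2$ each realized by infinitely many classes. In each case I would construct, stage by stage, a computable copy $\B$ of $\A$ defeating every partial computable $\phi_e\colon\A\to\B$. A diagonalization requirement $R_e$ waits until $\phi_e$ appears to commit $\phi_e(a)=b$ for some $a$ belonging to a class of appropriate type, and then delays growth in $\B$ so that either $|[b]^\B|\neq|[a]^\A|$ or $\phi_e$ is not total on $[a]^\A$. The requirement is satisfiable because in case (a) we can swap a finite class for an infinite one, in case (b) we have arbitrarily large finite sizes to choose from, and in case (c) we can swap a class of size $k_1$ with one of size $k_2$.

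The main obstacle will be case (b) of the necessity direction. There, diagonalizing requires us to withhold, in $\B$, the eventual size of the class into which $\phi_e$ maps $a$, while simultaneously guaranteeing that $\B$ is globally isomorphic to $\A$; keeping track of which classes of $\B$ are already promised to realize specific finite sizes, and which remain available for future diagonalization, demands a finite-injury priority construction with explicit reservations. Cases (a) and (c), by contrast, reduce to essentially straightforward finite-injury arguments because each requirement needs only a binary swap (finite versus infinite, or $k_1$ versus $k_2$) and hence the injury pattern is mild.
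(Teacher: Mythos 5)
The paper itself does not prove \cref{thm:eqcatchar} --- it is quoted from Calvert, Cenzer, Harizanov, and Morozov --- so your attempt can only be measured against the standard argument. Your overall architecture (explicit back-and-forth for sufficiency, diagonalization for necessity, with unbounded character as the delicate case) is the right one, but both halves of your sufficiency argument share a genuine gap: you treat the exact size of an equivalence class as something verifiable at a finite stage, and it is not. In case (1), the event ``$[a]^\A$ has been enumerated and is visibly complete'' is $\Pi^0_1$ in the diagram (one must check that \emph{no} further element of the infinite domain is equivalent to $a$), so your algorithm never certifiably reaches its second alternative and stalls on every element of a finite class. The standard fix is non-uniform: hard-code the finite set of all elements lying in finite classes of $\A$ and of $\B$, together with a bijection between these finite parts, and run the back-and-forth only on the remaining elements, all of which lie in infinite classes.

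In case (2) the problem is worse: the claim that bounded character makes $|[a]^\A|=m$ decidable ``by a bounded search of size $k+1$'' is false. By \cref{lem:compprop} the predicate $|[a]^\A|\le m$ is $\Pi^0_1$; the universal quantifier ranges over tuples from the whole domain, not over a set of size $k+1$, and knowing a priori that the size lies in $\{1,\dots,k\}\cup\{\infty\}$ does not tell you when the class has stopped growing. Indeed, your claim proves too much: a computable size function in both copies would yield a computable isomorphism for \emph{every} bounded-character structure with finitely many infinite classes, contradicting the necessity direction of the very theorem you are proving in the case of two abundant finite sizes. The correct route uses the hypothesis ``at most one abundant size $k^\ast$'' essentially: non-uniformly fix the finitely many classes of size $\neq k^\ast$ in each copy; every remaining element lies in a class of size exactly $k^\ast$, and this is recognizable from the fixed finite data (seeing $\ge k^\ast$ equivalent elements is $\Sigma^0_1$, and being inequivalent to the fixed representatives supplies the upper bound), after which the back-and-forth goes through. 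Your necessity sketch has the right case division, but note that in cases (a) and (c) the requirement must \emph{cause} growth of a reserved class of $\B$ after $\phi_e$ commits to covering it --- merely ``delaying growth'' cannot force $|[\phi_e(a)]^\B|\neq|[a]^\A|$ --- and a failed attempt must be retried with a fresh reserved class while preserving $\B\cong\A$; with infinitely many classes of each relevant kind this bookkeeping succeeds, but it is part of the proof rather than a footnote.
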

\begin{thm}\label{thm:charbecat}
  An equivalence structure $\A$ is computably bi-embeddably categorical if and only if it has finitely many infinite equivalence classes and bounded character.
\end{thm}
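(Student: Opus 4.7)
The plan is to prove both directions, leveraging the structural invariants that a computable bi-embeddable copy $\B$ must share with $\A$. For $(\Leftarrow)$, suppose $\A$ has bounded character $k$ and $n_\infty$ infinite classes; any bi-embeddable computable $\B$ shares the bound $k$, the value $n_\infty$, and for each $i \leq k$ agrees with $\A$ on whether $n_i$ is infinite or on its exact finite value. Fix non-uniformly $J = \{i \leq k : n_i = \infty\}$, $k^* = \max J$ (or $0$ if $J = \emptyset$), and the finite multiset $M$ of sizes of ``exceptional'' finite classes, i.e.\ those of size in $\{k^*+1,\ldots,k\}$, of which there are only finitely many, say $N$. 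By \cref{lem:compprop} together with boundedness, $a \in \Inf^\A$ is equivalent to the $\Sigma_1^\A$ condition $|[a]^\A| \geq k+1$, and likewise in $\B$; we wait for all $n_\infty$ infinite classes to manifest on each side and fix representatives. A further waiting phase locates $N$ additional classes (not among these) whose observed size exceeds $k^*$---these must be the exceptional finite ones---and waits until the multiset of their observed sizes equals $M$, after which a size-respecting pairing is computed by finite matching.

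With these pairings fixed, the embedding $\mu \colon \B \embeds \A$ is built greedily: on input $b \in B$, if $b$ is $E^\B$-equivalent to a representative of a previously handled class, map it to a fresh unused element of the paired $\A$-class, which always has room by the size-respecting pairing; otherwise $[b]^\B$ is a new ``standard'' class of size at most $k^*$, and we send it to a fresh, previously unused $\A$-class of size at least $k^*$, of which infinitely many exist because $k^* \in J$. The symmetric construction yields $\nu \colon \A \embeds \B$. For $(\Rightarrow)$, assume $\A$ has unbounded character or infinitely many infinite classes; I would construct computable copies $\A_0, \A_1 \biemb \A$ with no computable embedding between them by encoding $\emptyset'$ into $\A_1$. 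In the unbounded-character case, pick infinitely many pairs of distinct sizes realized in $\A$ and, in $\A_1$, grow the two classes of each pair in lockstep until $\phi_n(n)\downarrow$ commits which class takes which of the two sizes; this forces the class-membership function of $\A_1$ to compute $\emptyset'$, so no computable embedding into a canonical $\A_0$ with trivially computable class assignments can exist. In the infinitely-many-infinite-classes case, construct $\A_1$ so that designated classes grow to infinity only after a halting signal and otherwise remain finite of a fixed exceptional size, making the embedding invariants $\emptyset'$-hard.

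The main anticipated obstacle is in $(\Leftarrow)$: exact class sizes are genuinely $\Delta^0_2$ in bounded structures, so one cannot determine the size of a class in finite time from a single observation, and hence cannot pair exceptional classes ``on the fly''. The resolution is the multiset-waiting trick above, which uses the non-uniform knowledge of $M$ to detect (computably, after finite but unbounded waiting) when all observed sizes have stabilized. A secondary subtlety in $(\Rightarrow)$ is ensuring that the encoded $\A_1$ genuinely remains bi-embeddable with $\A$ during the construction; this is handled by padding the non-encoding classes with the abundant classes of sizes in $J$ (or the infinite classes, depending on the case) so that the final multiset of class sizes matches $\A$'s exactly.
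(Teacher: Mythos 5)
Your $(\La)$ direction is essentially the paper's argument: you isolate the largest size $k^*$ realized by infinitely many classes, treat the finitely many classes of size $>k^*$ non-uniformly, and greedily send the remaining small classes into fresh classes of size $k^*$, of which there are infinitely many. The multiset-stabilization trick is sound (each observed size is dominated by the true size, so equality of the multisets forces equality coordinatewise). One caveat: your claimed invariant that a bi-embeddable copy agrees with $\A$ on the number of classes of \emph{every} size $i\leq k$ is false for $i<k^*$ --- a copy may simply lack the small classes, absorbing them into size-$k^*$ classes. Fortunately your construction only uses agreement for sizes above $k^*$ and for the number of infinite classes, which does hold (\cref{lem:betypecompcat}).

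The $(\Ra)$ direction has a genuine gap. In the unbounded case you grow two classes in lockstep and let $\phi_n(n)\downarrow$ decide which one receives the larger size, claiming that a computable embedding would then compute $\emptyset'$. But an embedding gives only one-sided information about sizes: a computable $\mu:\A_1\embeds\A_0$ yields the computable upper bound $|c|\leq|\mu(c)|$ for each class $c$, and nothing prevents $\mu$ from sending \emph{both} classes of a pair into classes at least as large as the larger target (in the one-class-per-size structure, shifting every class up uniformly is a legal embedding), so the decoding fails; the reverse embedding $\A_0\embeds\A_1$ gives only lower bounds and fails for the same reason. The paper instead diagonalizes directly: once $\phi_{e}$ commits to an image for a designated element, it grows that element's class in $\B$ past the size of the image's class in $\A$, killing $\phi_e$ as an embedding outright with no decoding step. (A domination argument as in \cref{thm:dgcatunbounded} would also work.) The infinitely-many-infinite-classes case is worse: if designated classes of $\A_1$ become infinite exactly upon a halting signal, then a transversal of $\Inf^{\A_1}$ contains an infinite c.e.\ --- hence an infinite computable --- set, and mapping everything into those classes produces a \emph{computable} embedding, defeating your construction. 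What is actually needed there is that $\Inf^{\A_1}$ be immune (indeed $\tdeg{0}'$-immune for the paper's stronger \cref{cor:delta02cat}), which requires a simple-set-style construction as in \cref{thm:jumpimmuneeq}.
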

\begin{proof}
  $(\La)$. Let $\A$ be $k$-bounded and let $l$ be the size of the largest equivalence class such that $\A$ has infinitely many equivalence classes of size $l$ (notice that $l$ might be $0$). Then the restriction $\A_{>l}$ of $\A$ to equivalence classes of size larger than $l$ is computably categorical, as the number of equivalence classes in $\A_{>l}$ is finite, i.e.,  the bi-embeddability type and the isomorphism type of $\A_{>l}$ coincide. Hence, if $\B$ is a bi-embeddable copy of $\A$, then $\B_{>l}$ is isomorphic to $\A_{>l}$. Non-uniformly fix a computable isomorphism $f: \A_{>l} \ra \B_{>l}$.

  Let $T_{\A_{>l}}$ be a transversal of $\A_{>l}$, and $T_{\B_{>l}}$ one of $\B_{>l}$. Clearly both $T_{\A_{>l}}$ and $T_{\B_{>l}}$ are finite and hence computable. Furthermore the equivalence classes of size $l$ have a c.e.\ transversal as
\[|[a]^\A| \leq l \LR \forall x\in T_{\A_{>l}}\ a\not\in [x]^\A\]
  and $|[a]^\A|\geq l$ is $\Sigma_1$. Let $(b_i)_{i\in \omega}$ be an enumeration of the transversal of the equivalence classes of size $l$ in $\B$ and let $(a_i)_{i\in \omega}$ be a computable  enumeration of $A$. We can define a computable embedding $\nu:\A \embeds \B$ by recursion as follows.
\[\nu(a_i) =\begin{cases}
      f(a_i) & \exists y\in T_{\A_{>l}}\ a_i\in [y]^\A\\
      b_k, \ k=\mu l [ \forall j<i\ \nu(a_j)\not \in [b_l]^\B] & \forall j<i\ a_i\not \in [a_j]^\A \\
      \mu x\in B [x\in [\nu(a_j)]^\B \land \forall l<i\  x\neq \nu(a_l) ] & \exists j<i\ a_i\in [a_j]^\A
    \end{cases}
\]
  The embedding of $\B$ in $\A$ is defined similarly.

  $(\Ra)$. We show that computable equivalence structures with unbounded character and without infinite equivalence classes are not computably bi-embeddably categorical. The proof for equivalence structures with finitely many infinite equivalence classes is analogous. By \cref{cor:delta02cat} below, equivalence structures with infinitely many infinite equivalence classes are not even $\Delta^0_2$ bi-embeddably categorical.

  Note that any two equivalence structures with unbounded character and the same number of infinite equivalence classes are bi-embeddable and that any embedding needs to map elements to elements in equivalence classes of at least the same size.
  Consider the equivalence structure $\A$ with universe $\bigcup_{i\in \omega} \{ \langle i,n\rangle : n\leq i \}$ where all elements with the same left column are in the same equivalence class. This structure is clearly a computable equivalence structure with computable size function $|\cdot|$. We build a computable equivalence structure $\B=(\omega,E^\B)$ in stages such that no partial computable function is an embedding of $\B$ in $\A$. We want to satisfy the following requirements.
\[P_e:\quad \phi_e \text{ is not an embedding of }\B\text{ in }\A\]
  We say that a requirement $P_e$ needs attention at stage $s$ if the restriction of the approximation $\phi_{e,s}$ of $\phi_e$ to elements in the structure $\B_s$ is a partial embedding of $\B$ in $\A$ not equal to $\emptyset$. The structure $\B$ is the limit of the structures $\B_s$ constructed as follows.

  \noindent\emph{Construction:}\\
  \emph{Stage $s=0$:} $\B_0$ is the singleton $\langle 0,0\rangle$.

  \noindent\emph{Stage $s+1$:}
    Check if there is a requirement $P_e$, $e\leq s$ that needs attention. If such $P_e$ exists, do the following.
    Choose the least requirement $P_e$ that needs attention. Then $\phi_{e,s}(\langle s,0\rangle) \downarrow=\langle i,n\rangle$ for some $\langle i, n\rangle$. Let $\B_{s+1}=\B_s \cup \{ \langle s,s+j\rangle: 0\leq j\leq s\}\cup \{\langle s+1,0\rangle\}$; put all elements with $s$ in the left column in the equivalence class of $\langle s,0\rangle$, and let $\langle s+1,0\rangle$ be a singleton.

    If no $P_e$ needs attention set $\B_{s+1}=\B_s \cup \{\langle s+1,0\rangle\}$ and let $\langle s+1,0\rangle$ be a singleton.

  \medskip
  \noindent\emph{Verification:}\\
  Assume towards a contradiction that $\phi_e$ is an embedding of $\B$ in $\A$ and no $\phi_j$, $j<e$ is an embedding. Then there is a stage $s$ such that $\phi_{e,s}$ is a partial embedding of $\B_{s}$ in $\A$ and no $P_j$, $j<e$, needs attention. Thus, $P_e$ receives attention at stage $s+1$ and $\phi_{e,s}(\langle s,0\rangle)\downarrow$; say $\phi_{e,s}(\langle s,0\rangle)=\langle i,n\rangle$. Then, by construction, the equivalence class of $\langle s,0\rangle$ in $\B_{s+1}$ is bigger than the one of $\langle i,s\rangle$. Thus, $\phi_e$ can not be an embedding. However, by construction of $\B$, every equivalence class is grown only once and has the size of an equivalence in $\A$ plus one. Thus, $\B$ is unbounded without infinite equivalence classes and hence, is bi-embeddable with $\A$.
\end{proof}

The following corollaries follow directly from \cref{thm:eqcatchar} and \cref{thm:charbecat}.
\begin{corollary}
  There is a computably bi-embeddably categorical equivalence structure that is not computably categorical.
\end{corollary}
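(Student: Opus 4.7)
The plan is to exploit the gap between the characterization of computable categoricity in \cref{thm:eqcatchar} and that of computable bi-embeddable categoricity in \cref{thm:charbecat}. According to \cref{thm:charbecat}, computable bi-embeddable categoricity requires only finitely many infinite equivalence classes together with bounded character. In contrast, \cref{thm:eqcatchar}\cref{item:eqcatchar2} additionally demands that at most one finite $k$ occurs as the size of infinitely many equivalence classes. So the idea is to produce a computable equivalence structure with bounded character, no infinite classes at all, and at least two distinct finite sizes each realized infinitely often.

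Concretely, I would take $\A$ to be the computable equivalence structure with universe $\omega$ consisting of infinitely many singletons and infinitely many classes of size $2$, and no infinite classes. It is trivial to exhibit such a structure as a computable atomic diagram.

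The verification proceeds in two short steps. First, $\A$ has bounded character (with bound $2$) and finitely many (namely zero) infinite equivalence classes, so by \cref{thm:charbecat} it is computably bi-embeddably categorical. Second, $\A$ has infinitely many finite equivalence classes, so condition~(1) of \cref{thm:eqcatchar} fails; moreover, there are two distinct finite values $k \in \{1,2\}$ each of which is the size of infinitely many classes of $\A$, so condition~\cref{item:eqcatchar2} of \cref{thm:eqcatchar} fails as well. Hence $\A$ is not computably categorical.

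There is no real obstacle here: once the two characterization theorems are in hand, the corollary is essentially a pigeonhole observation about which combinatorial profiles are allowed by each characterization. The only thing to be slightly careful about is making sure the witness does not accidentally satisfy clause~(1) of \cref{thm:eqcatchar}, which is why the example must have infinitely many finite classes rather than, say, finitely many classes total.
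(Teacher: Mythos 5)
Your proposal is correct and matches the paper's intent exactly: the paper derives this corollary directly from \cref{thm:eqcatchar} and \cref{thm:charbecat}, and your explicit witness (infinitely many singletons plus infinitely many classes of size $2$, no infinite classes) is precisely the kind of structure that separates the two characterizations. The verification that it fails both clauses of \cref{thm:eqcatchar} while satisfying the hypotheses of \cref{thm:charbecat} is accurate.
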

\begin{corollary}
  There is a computably categorical equivalence structure that is not computably bi-embeddably categorical.
\end{corollary}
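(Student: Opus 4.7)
The plan is to combine the two characterizations just established: Theorem~\ref{thm:eqcatchar} describes the computably categorical equivalence structures, while Theorem~\ref{thm:charbecat} describes the computably bi-embeddably categorical ones. Clause~(1) of the former admits structures in which all but finitely many equivalence classes are infinite, whereas the latter requires that there be only finitely many infinite classes. So any computable equivalence structure with infinitely many infinite equivalence classes and only finitely many finite ones will witness the corollary.

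Concretely, I would take $\A$ to be the computable equivalence structure with universe $\omega$ whose equivalence classes are $\{\langle n,k\rangle : k\in\omega\}$ for each $n\in\omega$, so that $\A$ has countably many infinite classes and no finite classes. Computability of $\A$ is immediate from the explicit description of the classes. Since $\A$ has finitely many (in fact, zero) finite classes, clause~(1) of \cref{thm:eqcatchar} yields that $\A$ is computably categorical. Since $\A$ has infinitely many infinite classes, \cref{thm:charbecat} yields that $\A$ is not computably bi-embeddably categorical. Putting these two observations together proves the corollary, and no substantive obstacle is anticipated: the argument is a one-line application of the two preceding characterizations to a well-chosen example.
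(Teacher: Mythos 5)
Your proposal is correct and matches the paper's (implicit) argument: the paper states this corollary follows directly from \cref{thm:eqcatchar} and \cref{thm:charbecat}, and your concrete witness --- a computable structure with infinitely many infinite classes and no finite classes --- satisfies clause~(1) of \cref{thm:eqcatchar} while violating the characterization in \cref{thm:charbecat}, exactly as intended.
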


Calvert, Cenzer, Harizanov, and Morozov~\cite{calvert_effective_2006} showed that a computable equivalence structure is computably categorical if and only if it is relatively computably categorical. The analogous result holds in the context of bi-embeddability.
\begin{proposition}\label{prop:compcatiffrelcat}
  Let $\A$ be a computable equivalence structure. Then $\A$ is computably bi-em\-bed\-dably categorical if and only if it is relatively computably bi-em\-bed\-dably categorical.
\end{proposition}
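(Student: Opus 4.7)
The forward direction is immediate: if $\A$ is computable and $\B$ is a computable bi-embeddable copy, then $\A\oplus\B$ is computable, so any $\Delta^{\A\oplus\B}_1$ embedding is already computable. For the converse, assume $\A$ is computably bi-embeddably categorical. By \cref{thm:charbecat}, $\A$ has bounded character (say, $k$-bounded) with only finitely many infinite classes; as in the proof of \cref{thm:charbecat}, fix the largest size $l$ for which $\A$ has infinitely many classes, possibly $l=0$. The plan is to relativize the construction in that proof to an arbitrary bi-embeddable copy $\B$, producing embeddings computable in $\A\oplus\B$.

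The first step is to verify that bi-embeddability forces $\B$ to share the relevant combinatorics of $\A$. Since embeddings of equivalence structures send distinct classes into distinct classes and do not shrink class sizes, $\A\approx\B$ implies that for every $n\ge 1$ the two structures have the same number of classes of size $\ge n$. Applied with $n=k+1$ and $n=l+1$, this yields that $\B$ is $k$-bounded, has the same finite number of infinite classes as $\A$, has the same finite number of classes of each finite size strictly greater than $l$, and has infinitely many classes of size exactly $l$. In particular $\A_{>l}\cong\B_{>l}$.

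Next, I nonuniformly fix finite transversals $T_\A$ of $\A_{>l}$ and $T_\B$ of $\B_{>l}$ of the same cardinality $m$, chosen so that paired classes have matching sizes. With $T_\A$ as a parameter, $\A_{>l}$ is a finite union of $\Delta^\A_1$ classes, hence $\Delta^\A_1$, and similarly $\B_{>l}$ is $\Delta^\B_1$; an isomorphism $f:\A_{>l}\to\B_{>l}$ that is $\Delta^{\A\oplus\B}_1$ is built by enumerating each paired class $[a]^\A$ and $[b]^\B$ in the natural order of $\omega$ and matching corresponding elements. A transversal of the size-$l$ classes of $\B$ is $(\A\oplus\B)$-c.e., since with $T_\B$ fixed ``$|[b]^\B|\le l$'' reduces to the $\Delta^\B_1$ condition $b\notin\B_{>l}$, while ``$|[b]^\B|\ge l$'' is $\Sigma^\B_1$. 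With these ingredients the piecewise recursive definition of $\nu:\A\embeds\B$ in the proof of \cref{thm:charbecat} transfers verbatim as an $\A\oplus\B$-computable procedure, and the embedding in the opposite direction is defined symmetrically. The main obstacle is the combinatorial comparison of $\A$ and $\B$ from bi-embeddability alone; once that is in place, the relativization of the construction is routine.
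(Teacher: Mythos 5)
Your proposal is correct and takes essentially the same approach as the paper, whose entire proof is the remark that relativizing the proof of \cref{thm:charbecat} gives the result; you carry out exactly that relativization, additionally supplying the details the paper leaves implicit (the counting argument showing $\B_{>l}\cong\A_{>l}$ for an arbitrary, not necessarily computable, bi-embeddable copy $\B$, and the fact that the isomorphism on the $>l$ part and the transversal of the size-$l$ classes can be taken $\Delta^{\A\oplus\B}_1$ and $(\A\oplus\B)$-c.e., respectively).
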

\begin{proof}
  Relativization of the proof of \cref{thm:charbecat} ensures the result.
\end{proof}
 \section{$\Delta^0_2$ and $\Delta^0_3$ bi-em\-bed\-dable categoricity}\label{sec:relcat}In this section we characterize $\Delta^0_2$ and $\Delta^0_3$ bi-embeddably categorical equivalence structures. We will show that a computable equivalence structure is $\Delta^0_2$ ($\Delta^0_3$) bi-embeddably categorical if and only if it relatively so. We will also see that all equivalence structures are relatively $\Delta^0_3$ bi-embeddably categorical. This, together with the fact that any countable equivalence structure is bi-embeddable with a computable one~\cite{fokina_bi-embeddability_2017} gives a complete structural characterization of bi-embeddable categoricity for equivalence structures. We also establish the remaining parts of \cref{thm:dgcat}. At first we characterize $\Delta^0_2$ bi-embeddably categorical equivalence structures. We start by exhibiting a class of equivalence structures that is relatively $\Delta^0_2$ bi-embeddably categorical.
\begin{thm}\label{thm:biembdelta02}
  If $\A$ has finitely many infinite equivalence classes, then $\A$ is relatively $\Delta^0_2$ bi-embeddably categorical.
\end{thm}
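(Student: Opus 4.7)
The plan is to construct $\Delta^{\A \oplus \B}_2$-computable embeddings $\mu\colon \A \embeds \B$ and $\nu\colon \B \embeds \A$. First I would non-uniformly fix representatives $a_1, \dots, a_n$ of the finitely many infinite equivalence classes of $\A$; because any embedding sends distinct infinite classes injectively into distinct infinite classes, bi-embeddability forces $\B$ to have exactly $n$ infinite classes as well, and representatives $b_1, \dots, b_n$ of these can also be fixed non-uniformly. Once these parameters are available, $Inf^\A = \bigcup_{i \le n}[a_i]^\A$ and $Fin^\A$ are decidable from $\A$ (and analogously for $\B$), and by \cref{lem:compprop} the size function $a \mapsto |[a]^\A|$ on $Fin^\A$ is $\Delta^\A_2$.

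Next I would record the combinatorial observation that drives the matching: bi-embeddability plus equality of infinite-class counts implies that for every finite $k \ge 1$, the number of finite $\A$-classes of size $\ge k$ equals the number of finite $\B$-classes of size $\ge k$ (either the same finite number, or both infinite). Let $T^{fin}_\A$ and $T^{fin}_\B$ denote the sets of least representatives of finite classes on each side, which are computable from the respective structures using the fixed parameters.

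The heart of the construction is an injection $\phi\colon T^{fin}_\A \to T^{fin}_\B$ satisfying $|[t]^\A| \le |[\phi(t)]^\B|$, obtained by a greedy procedure that processes elements of $T^{fin}_\A$ in natural order and assigns each $t$ a previously unused representative in $T^{fin}_\B$ whose class is large enough; the count identity yields an invariant showing the greedy never gets stuck. With $\phi$ in place, $\mu$ is defined by mapping $[a_i]^\A$ bijectively onto $[b_i]^\B$ via matched natural-order enumerations and mapping each finite class $[t]^\A$ injectively into $[\phi(t)]^\B$; the reverse embedding $\nu$ is built symmetrically.

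The main obstacle is to realize $\phi$ as a $\Delta^{\A\oplus\B}_2$-computable function, since picking the least unused representative of minimum available size is a priori a $\Pi^\B_2$ choice. I would handle this by splitting into two sub-cases. If $\A$ has bounded character, then the relativization of \cref{thm:charbecat} directly yields $(\A \oplus \B)$-computable (hence $\Delta^{\A\oplus\B}_2$) embeddings, bypassing the greedy entirely. If $\A$ has unbounded character, then the count identity forces infinitely many $\B$-classes of size $\ge k$ for every $k$, so the simpler version of the greedy that just picks the least unused representative in $T^{fin}_\B$ with class size $\ge |[t]^\A|$ always terminates and is manifestly $\Delta^{\A\oplus\B}_2$ using the $\Delta_2$ size function.
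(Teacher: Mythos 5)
Your proposal is correct and follows essentially the same route as the paper's proof: dispose of the bounded case by relativizing \cref{thm:charbecat}, and in the unbounded case fix the finitely many infinite classes non-uniformly and build the embedding greedily, sending each new finite class to the least unused $\B$-class of at least its size, which exists because unboundedness gives infinitely many classes above every size threshold and is found $\Delta^{\A\oplus\B}_2$-computably via the size predicates of \cref{lem:compprop}. The ``count identity'' and the explicit injection $\phi$ on transversals are a mild repackaging of the paper's direct recursive definition of $\nu$, not a different argument.
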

\begin{proof}
  By \cref{thm:charbecat,prop:compcatiffrelcat} equivalence structures with bounded character are relatively computably bi-embeddably categorical and thus relatively $\Delta^0_2$ bi-embeddably categorical. It remains to show that equivalence structures with unbounded character and finitely many infinite equivalence classes are relatively $\Delta^0_2$ bi-embeddably categorical.

  Let $\A$ have finitely many infinite equivalence classes and unbounded character, and let $\B$ be a bi-embeddable copy of $\A$. Note that $\B$ must have the same number of infinite equivalence classes as $\A$. Fix transversals $T^\A$ and $T^\B$ of $Inf^\A$ and $Inf^\B$, respectively. Let $f: T^\A \ra T^\B$ be a bijection. As $T^\A$ and $T^\B$ are finite sets, they are computable. We define a $\Delta^{\A\oplus\B}_2$ embedding $\nu:\A\embeds \B$ by recursion. Let $(a_i)_{i\in \omega}$ be a computable enumeration of $A$.
\[\nu(a_0)=\begin{cases}
    f(t) & \text{if } \exists t\in T^\A\ a_0\in [t]^\A\\
    \mu x\in B [ |[x]^\B|\geq |[a_0]^\A| \land \forall t\in T^\B\ x\not \in [t]^\B] & \text{otherwise}
\end{cases}\]
  Assume $\nu$ has been defined for $a_0,\dots a_s$. We define $\nu(a_{s+1})$; there are three cases.

  \noindent \emph{Case 1:} $a_{s+1}$ is equivalent to an element for which $\nu$ has already been defined, i.e., $\exists j\leq s\ a_{s+1}\in [a_j]^\A$. Then
\[\nu(a_{s+1})=\mu x\in B [x\in [\nu(a_j)]^\B \land \forall i\leq s \ x\neq  \nu(a_i)].\]
  \noindent \emph{Case 2:} $a_{s+1}$ is not equivalent to any element for which $\nu$ has been defined and its equivalence class is infinite, i.e., $\exists t\in T^\A\ a_{s+1}\in [t]^\A$, then
\[\nu(a_{s+1})=  \mu x\in B [x\in [f(t)]^\B \land \forall i\leq s \ x\neq \nu(a_i) ].\]
  \noindent \emph{Case 3:} $a_{s+1}$ is not equivalent to any element for which $\nu$ has been defined and its equivalence class is finite. Then
\[\nu(a_{s+1})= \mu x\in B [|[x]^\B| \geq |[a_{s+1}]^\A| \land \forall t\in T^\B\  x\not \in [t]^\B \land \forall i\leq s\  x\not \in  [\nu(a_i)]^\B].\]

As $\A$ and $\B$ are both unbounded, at any stage $s$ of the construction we can find an element in B with an equivalence class greater than or equal to the one of $a_s$ in $\A$. Therefore, $\nu$ is an embedding. As $T^\A, T^\B$, and $f$ are computable and comparing the size of two equivalence classes is $\Delta^{\A\oplus \B}_2$, $\nu$ is $\Delta^{\A\oplus \B}_2$.
\end{proof}

The following is the relativization of the classical computability theoretic concepts of immune and simple sets to $\tdeg{0}'$.
\begin{definition}
  An infinite set $A$ is \emph{$\tdeg{0}'$-immune} if it contains no infinite set which is computably enumerable in $\tdeg{0}'$. A $\Sigma^0_2$ set $A$ is \emph{$\tdeg{0}'$-simple} if it is the complement of a $\tdeg{0}'$-immune set.
\end{definition}

\begin{thm}\label{thm:jumpimmuneeq}
     There is a computable equivalence structure $\A$ with infinitely many infinite equivalence classes such that $Fin^\A$ is $\tdeg{0}'$-simple. Hence, $Inf^\A$ is $\tdeg{0}'$-immune.
\end{thm}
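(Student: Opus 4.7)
The plan is to build $\A$ by a two-part strategy: first, construct an auxiliary $\tdeg{0}'$-simple $\Sigma^0_2$ set $S$ via a relativization of Post's simplicity argument; second, realize $S$ as the finite-class data of a computable equivalence structure $\A$.

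For the first part, I would carry out the Post construction relative to $\emptyset'$. Enumerate $S$ as $\emptyset'$-c.e.\ by a $\emptyset'$-effective stage-by-stage procedure: at $\emptyset'$-stage $s$, for each $e<s$ whose strategy has not yet acted, check whether $W_e^{\emptyset'}[s]$ contains some $x>2e$ not in the current $S_s$, and if so enumerate the least such $x$ and mark $R_e$ as acted. Standard verification then shows that $S$ is $\Sigma^0_2$, that $\overline{S}$ is infinite (each $R_e$ puts at most one element into $S$, so $|\overline{S}\cap\{0,\dots,2e\}|\geq e$), and that $\overline{S}$ contains no infinite $\Sigma^0_2$ set; in other words, $S$ is $\tdeg{0}'$-simple.

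For the second part, I would translate $S$ into an equivalence structure. By the limit lemma fix a computable $g(x,s)$ with $\lim_s g(x,s)=1$ iff $x\in S$, and define $\A$ on universe $\omega\times\omega$ by the computable equivalence relation in which $(x,n)$ for $n\geq 1$ joins the class of $(x,0)$ exactly when $g(x,n-1)=0$ and is a singleton otherwise, with elements in different columns always inequivalent. Then the class of $(x,0)$ is infinite iff $g(x,s)=0$ for infinitely many $s$, that is, iff $x\notin S$; since $\overline{S}$ is infinite, $\A$ has infinitely many infinite classes. The simplicity of $S$ passes to $\A$: the computable bijection $x\mapsto (x,0)$ identifies $\overline{S}$ with the set of column-representatives of the infinite classes of $\A$, so any infinite $\Sigma^0_2$ subset threatening the immunity of $\Inf^\A$ pulls back to an infinite $\Sigma^0_2$ subset of $\overline{S}$, contradicting its $\tdeg{0}'$-immunity. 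Hence $\Fin^\A$ is $\tdeg{0}'$-simple as the theorem demands.

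The main obstacle is the relativized Post construction: verifying that the $\emptyset'$-effective enumeration of $S$ really does diagonalize against every infinite $\Sigma^0_2$ set using only the $\emptyset'$ oracle, exactly as in the unrelativized case but with $\Sigma^0_2$ in place of c.e. Once $S$ is in hand, the encoding into a computable equivalence structure is routine, provided one is careful that the resulting equivalence relation on $\omega\times\omega$ is computable from $g$.
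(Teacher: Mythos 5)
Your two-step architecture (first build a $\tdeg{0}'$-simple set $S$ by relativizing Post's construction, then code $\ol{S}$ as the set of columns carrying infinite classes) is genuinely different from the paper's proof, which diagonalizes directly on the elements of the structure: it runs requirements $P_e$ demanding $\jhalt_e\cap\Fin^\A\neq\emptyset$ against a $\Sigma_2$ approximation of $\jhalt_e$, satisfies them by freezing (``blocking'') the entire equivalence class of a well-chosen witness above $e^3$, and keeps infinitely many classes infinite by a counting argument on designated versus blocked elements. The relativized Post construction and the coding of a $\Pi^0_2$ set as the set of infinite-class columns of a computable structure are both standard and unproblematic.

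The gap is in your last step. The claim that any infinite $\Sigma^0_2$ subset of $\Inf^\A$ pulls back to an infinite $\Sigma^0_2$ subset of $\ol{S}$ is false: such a subset can be concentrated in a single column. In fact, in your structure each infinite equivalence class $[(x,0)]^\A$ is itself a \emph{computable} infinite subset of $\Inf^\A$ (membership is decided by evaluating $g$), and its projection to the first coordinate is a singleton, so no contradiction with the immunity of $\ol{S}$ arises. What your construction actually establishes is the weaker statement that $\Inf^\A$ contains no infinite $\Sigma^0_2$ set meeting infinitely many equivalence classes (equivalently, no infinite $\Sigma^0_2$ partial transversal), since only such a set projects to an infinite $\Sigma^0_2$ subset of $\ol{S}$. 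That weaker property happens to be what is used in \cref{prop:infnotdelta02}, where the range of a $\tdeg{0}'$-computable embedding of a structure with infinitely many infinite classes necessarily meets infinitely many classes of the target; but it is not the conclusion you claim, and the case of $\Sigma^0_2$ sets living inside finitely many classes must be confronted explicitly --- the paper's blocking mechanism does so by making the whole offending class finite, something your ``fix $S$ first, then code it'' design cannot do because the infinite classes are decided before the $\Sigma^0_2$ sets of structure elements are consulted. A smaller point: the limit lemma does not apply to $S$, since a $\tdeg{0}'$-simple set is c.e.\ in $\emptyset'$ but not computable from $\emptyset'$ and hence not $\Delta^0_2$; you need the $\Sigma^0_2$ approximation ($x\in S$ iff $x\in S_s$ for cofinitely many $s$), which still makes your rule produce an infinite class exactly when $x\notin S$.
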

\begin{proof}
  We build $\A$ with universe $\omega$ such that $Fin^\A$ is $\tdeg{0}'$-simple, i.e., for every infinite $\Sigma_2$ set $S$ the intersection of $Fin^\A$ and $S$ is nonempty.
  It has to satisfy the following requirements.
\[P_e:\quad |\jhalt_e|=\infty\quad \Ra\quad \jhalt_e\cap Fin^\A\neq \emptyset\]
     and the overall requirement that any transversal $T_{\Inf^\A}$ of $\Inf^\A$ is infinite.
\[G: \quad T_{\Inf^\A} \text{ is infinite.}\]
  Our strategy to satisfy a requirement $P_e$ is to pick a witness $x_e$ for $\jhalt_e$ and prevent the equivalence class of $x_e$ from growing any further.

  We will construct $\A$ in stages. Elements and equivalence classes can be in one of three states. An element  is \emph{blocked by $P_e$} if it is equivalent to a witness picked by $P_e$. During the construction we also \emph{designate} unblocked elements for expansion, i.e., we allow the equivalence class of such elements to grow in a later stage. Elements which are neither designated nor blocked are \emph{fresh}, these elements have equivalence classes of size~$1$.
     The set $\jhalt_{e,s}$ is the $\Sigma_2$ approximation of $\jhalt_{e}$ at stage $s$ and the set $Fin^\A_s$ is the set of blocked elements at stage $s$; we will have that $Fin^\A=\lim_s Fin^\A_s$.
     A strategy $P_e$ \emph{needs attention} at stage $s+1$ if
     \begin{equation}\tag{$\star$}\label{eq:const}\jhalt_{e,s}\cap Fin^\A_s = \emptyset \qquad \& \qquad \exists x > e^3\  (x\in \jhalt_{e,s}).
     \end{equation}
     \emph{Construction:}\\
     \emph{Stage $s=0$:} Let $A=\omega$ and $E^\A=\{ (x,x):x\in\omega\}$. Define $Fin^\A_s=\emptyset$.\\
     \emph{Stage $s+1$:} Assume we have built $\A_s$.
     \begin{enumerate}
       \item Choose the least $e<s$ such that $P_e$ needs attention. Take the least $x>e^3$ satisfying the second part of the matrix in \cref{eq:const}. Check if $x\leq s$. If so, then take the element $y$, $e^3<y\leq s$ which has been in the approximation the longest without interruption and declare its equivalence class as blocked by $P_e$. If not, then declare the equivalence class of $x$ as blocked by $P_e$. If $P_e$ receives attention for the first time, designate the least fresh element.
       \item Add to all designated equivalence classes a fresh element bigger than $s$.
       \item Check if for any $e<s$ there is an element $x$ blocked by $P_e$ that is not blocked by any $P_j$, $j<s$, and $x\in \jhalt_{e,s-1}$ but $x\not \in \jhalt_{e,s}$. If so, declare the equivalence class of $x$ as designated.
   \end{enumerate}
   \emph{Verification}:
   It is clear from the construction that $\A=\lim_s \A_s$ is a computable equivalence structure. The following two claims establish that it has the desired properties.
   \begin{claim}
     Every requirement $P_e$ is eventually satisfied.
   \end{claim}
   \begin{proof}
     By construction, no requirement $P_e$ injures a requirement $P_j$, $j\neq e$, so it is sufficient to consider them in isolation. As $\jhalt_{e,s}$ is a $\Sigma_2$ approximation we have that $x\in \jhalt_{e}$ iff there is a stage $s_0$ such that for all $t>s_0$, $x\in \jhalt_{e,t}$. In particular, there is a stage after which an element $x_0\in \jhalt_{e}$, $x_0>e^3$ will be in the approximation longer than any element $y\not \in \jhalt_{e}$.
     This element will be chosen by our strategy the next time that $P_e$ receives attention (which it will if its current witness is not in $\jhalt_{e}$). Hence, $P_e$ is satisfied in the limit.
   \end{proof}
   \begin{claim}
     The requirement $G$ is satisfied.
   \end{claim}
   \begin{proof}
    Assume that $P_e$ is the maximum requirement that acted at some stage in the construction. At most $e$ equivalence classes are blocked at this stage and because at every stage every designated equivalence class grows by one element, at most $e^2$ out of $e^3$ elements are blocked. By the same reasoning at most $e^2$ elements are designated for expansion. Hence, at least $e^3-2e^2$ fresh elements are left to expand the equivalence classes of designated elements and so, for $e>2$ there are enough fresh elements smaller than $e^3$ left to expand the designated elements. As every requirement that receives attention for the first time designates one fresh element, in the limit there are infinitely many infinite equivalence classes.
   \end{proof}
   \end{proof}
It is immediate that two equivalence structures $\A$ and $\B$ with infinitely many infinite equivalence classes are bi-embeddable. To obtain an embedding of $\A$ in $\B$ just map all equivalence classes of $\A$ to infinite equivalence classes of $\B$.
\begin{proposition}\label{prop:infnotdelta02}
  If an equivalence structure $\A$ has infinitely many infinite equivalence classes, then it is not $\Delta^0_2$ bi-embeddably categorical.
\end{proposition}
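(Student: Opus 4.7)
The plan is to combine \cref{thm:jumpimmuneeq} with the bi-embeddability remark immediately preceding this proposition. Let $\A$ be a computable equivalence structure with infinitely many infinite equivalence classes, and let $\A_0$ denote the computable structure supplied by \cref{thm:jumpimmuneeq}, for which $\Inf^{\A_0}$ is $\tdeg{0}'$-immune. Since both $\A$ and $\A_0$ have infinitely many infinite equivalence classes, they are bi-embeddable, so $\A_0$ is a legitimate computable bi-embeddable copy of $\A$ with which to witness the failure of $\Delta^0_2$ bi-embeddable categoricity. It therefore suffices to prove that no embedding $\nu:\A\embeds\A_0$ can be $\Delta^0_2$.

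I would then observe that any embedding $\nu:\A\embeds\A_0$ must satisfy $\nu(\Inf^\A)\subseteq \Inf^{\A_0}$, since embeddings cannot decrease the size of an equivalence class. As $\A$ has infinitely many infinite classes, $\Inf^\A$ is infinite, and by injectivity of $\nu$ so is $\nu(\Inf^\A)$.

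Finally, I would argue that a $\Delta^0_2$ embedding $\nu$ would force $\nu(\Inf^\A)$ to be $\Sigma^0_2$: by \cref{lem:compprop} the set $\Inf^\A$ is $\Pi^0_2$ and the graph of $\nu$ is $\Delta^0_2$, so
\[
y\in \nu(\Inf^\A) \iff \exists x\,(x\in \Inf^\A \wedge \nu(x)=y)
\]
is $\Sigma_1$ relative to $\tdeg{0}'$, hence $\Sigma^0_2$. This produces an infinite $\Sigma^0_2$ (equivalently, c.e.\ in $\tdeg{0}'$) subset of $\Inf^{\A_0}$, contradicting $\tdeg{0}'$-immunity. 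The main obstacle---constructing a computable equivalence structure whose infinite-part set is $\tdeg{0}'$-immune---has already been overcome in \cref{thm:jumpimmuneeq}; the proposition itself only requires the short reduction sketched above.
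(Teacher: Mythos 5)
Your overall plan---play the structure $\A_0$ from \cref{thm:jumpimmuneeq} against $\A$ and contradict $\tdeg{0}'$-immunity---is the right one, but the complexity computation at the final step is wrong, and this is a genuine gap. You claim that $\nu(\Inf^\A)$ is $\Sigma^0_2$ because it is defined by $\exists x\,(x\in \Inf^\A \wedge \nu(x)=y)$ with $\nu$ of $\Delta^0_2$ graph. However, by \cref{lem:compprop} the predicate $x\in\Inf^\A$ is $\Pi^0_2$, i.e.\ $\Pi_1$ relative to $\tdeg{0}'$, so prefixing an existential quantifier yields a formula that is $\Sigma_2$ relative to $\tdeg{0}'$, i.e.\ $\Sigma^0_3$ --- not ``$\Sigma_1$ relative to $\tdeg{0}'$'' as you assert. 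An infinite $\Sigma^0_3$ subset of $\Inf^{\A_0}$ does not contradict $\tdeg{0}'$-immunity, which only forbids infinite subsets that are c.e.\ in $\tdeg{0}'$ (equivalently, $\Sigma^0_2$). For an arbitrary $\A$ whose finite classes have unbounded size there is no $\tdeg{0}'$-effective way to isolate infinitely many elements guaranteed to be mapped into $\Inf^{\A_0}$, so the reduction does not go through as written.

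The paper sidesteps this by never trying to locate $\Inf^\A$ inside $\A$: it works with a computable structure $\C$ consisting of infinitely many infinite equivalence classes and \emph{no finite classes}. Then the \emph{entire} range of any embedding $\C\embeds\A_0$ is an infinite subset of $\Inf^{\A_0}$, and the range of a total $\tdeg{0}'$-computable injection on a computable domain really is $\Sigma^0_2$, giving the contradiction. Since $\C$ and $\A_0$ are both computable bi-embeddable copies of the given $\A$, if $\A$ were $\Delta^0_2$ bi-embeddably categorical one could compose $\tdeg{0}'$-computable embeddings $\C\embeds\A\embeds\A_0$ to obtain a $\tdeg{0}'$-computable embedding $\C\embeds\A_0$, which is impossible. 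Inserting this auxiliary copy (or otherwise replacing $\Inf^\A$ by a $\Sigma^0_2$-enumerable infinite set of elements whose images must lie in $\Inf^{\A_0}$) repairs your argument.
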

\begin{proof}
  Let $\A$ be a computable equivalence structure with infinitely many infinite equivalence classes and no finite equivalence classes, and take $\B$ as in \cref{thm:jumpimmuneeq}. Then, by the above argument, they are bi-embeddable and every embedding of $\A$ in $\B$ has as range an infinite subset of $Inf^\B$.

  Now, assume that $\nu:\A\embeds \B$ is $\tdeg{0}'$-computable. Then, its range is a $\Sigma^0_2$ set and an infinite subset of $Inf^\B$. But $Inf^\B$ is $\tdeg{0}'$-immune, a contradiction.
  Hence, no embedding of $\A$ in $\B$ is $\tdeg{0}'$-computable and therefore $\A$ is not $\Delta^0_2$ bi-embeddably categorical.
\end{proof}
\begin{corollary}\label{cor:delta02cat}
   An equivalence structure $\A$ is $\Delta^0_2$ bi-embeddably categorical if and only if it has finitely many infinite equivalence classes.
\end{corollary}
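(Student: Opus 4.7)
The plan is very short: this corollary is a packaging of the two results immediately above it, so I would simply point out how each direction falls out.

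For the $(\Leftarrow)$ direction, I would assume $\A$ has finitely many infinite equivalence classes and invoke \cref{thm:biembdelta02}, which gives relative $\Delta^0_2$ bi-embeddable categoricity. Since $\A$ is computable, for any computable bi-embeddable copy $\B$ we have $\A \oplus \B \equiv_T \emptyset$, so $\Delta^{\A\oplus\B}_2 = \Delta^0_2$; thus the embeddings produced by \cref{thm:biembdelta02} are $\Delta^0_2$, giving $\Delta^0_2$ bi-embeddable categoricity.

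For the $(\Rightarrow)$ direction, I would argue the contrapositive: if $\A$ has infinitely many infinite equivalence classes, then by \cref{prop:infnotdelta02} it fails to be $\Delta^0_2$ bi-embeddably categorical, since the structure $\B$ produced in \cref{thm:jumpimmuneeq} is bi-embeddable with $\A$ but no embedding $\A \embeds \B$ can be $\tdeg{0}'$-computable (its range would be an infinite $\Sigma^0_2$ subset of the $\tdeg{0}'$-immune set $Inf^\B$).

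There is no real obstacle here — all the substantive work was done in \cref{thm:biembdelta02} and \cref{prop:infnotdelta02}. The only minor point worth noting explicitly is the passage from the relative notion to the non-relative notion in the $(\Leftarrow)$ direction, which is immediate because both structures are computable.
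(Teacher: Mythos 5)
Your proposal is correct and matches the paper's (implicit) derivation exactly: the corollary is stated without proof precisely because it follows from \cref{thm:biembdelta02} (with the observation that relative $\Delta^0_2$ bi-embeddable categoricity implies the non-relative notion for computable structures, since $\A\oplus\B$ is computable) together with \cref{prop:infnotdelta02}. Nothing is missing.
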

Calvert, Cenzer, Harizanov, and Morozov~\cite{calvert_effective_2006} characterized relatively $\Delta^0_2$ categorical computable equivalence relations. Their result relativizes.
\begin{proposition}[{Relativization of~\cite[Corollary 4.8]{calvert_effective_2006}}]\label{prop:chardelta02}
  A countable equivalence structure $\A$ is relatively $\Delta^0_2$ categorical if and only if $\A$ has finitely many infinite equivalence classes or $\A$ has bounded character.
\end{proposition}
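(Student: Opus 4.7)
The plan is to directly relativize the proof of \cite[Corollary 4.8]{calvert_effective_2006}, replacing the role of $\emptyset'$ there by $(\A \oplus \B)'$ throughout, and using \cref{lem:compprop} (which is already stated in relativized form) in place of its unrelativized analogue.

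For the $(\Leftarrow)$ direction I would construct an isomorphism $\nu:\A\cong\B$ by back-and-forth using the oracle $(\A\oplus\B)'$. There are two cases. If $\A$ is $k$-bounded, then by \cref{lem:compprop}(\ref{it:compprop1}) the predicates ``$|[a]^\A|=n$'' and ``$|[b]^\B|=n$'' are uniformly $\Delta_2^{\A\oplus\B}$ for $n\le k$, and ``$[a]^\A$ is infinite'' is equivalent to $|[a]^\A|\ge k+1$, which is $\Sigma_1^\A$. Thus $(\A\oplus\B)'$ partitions both universes into blocks indexed by class size (including an ``infinite'' block), and within matching blocks we can do the standard back-and-forth. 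If instead $\A$ has only finitely many infinite classes, then so does $\B$; we non-uniformly fix finite transversals $T^\A$ of $\Inf^\A$ and $T^\B$ of $\Inf^\B$, which makes $\Inf^\A$ and $\Inf^\B$ computable from $\A$ and $\B$ respectively, match them arbitrarily, and then back-and-forth through the finite classes using $(\A\oplus\B)'$ to read off their sizes.

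For the $(\Rightarrow)$ direction, I would show that if $\A$ has unbounded character \emph{and} infinitely many infinite classes, then $\A$ is not relatively $\Delta^0_2$ categorical. I would build a copy $\B\cong\A$, computable in $\A$, by relativizing the diagonalization from \cite[Corollary 4.8]{calvert_effective_2006}: enumerate the (index, oracle) pairs giving candidate $\Delta_2^{\A\oplus\B}$ functions $\nu_e$ and, at each stage, act against the least unsatisfied requirement by choosing where to place new elements of $\B$ into equivalence classes so that $\nu_e$ cannot be an isomorphism (for instance, by ensuring that the eventual size of $[\nu_e(a)]^\B$ differs from that of $[a]^\A$ for some witness $a$). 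Unbounded character together with infinitely many infinite classes provides exactly the flexibility needed to always place elements into either a sufficiently large finite class or an infinite one, while preserving $\B\cong\A$.

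The main obstacle is the diagonalization in the $(\Rightarrow)$ direction: we must approximate $\Delta_2^{\A\oplus\B}$ functions while simultaneously building $\B$ (so the oracle $\A\oplus\B$ is only being specified as we go). This is handled exactly as in the original proof by using the standard trick of guessing $\B$'s future values and injuring strategies when those guesses are refuted; since the construction itself is computable in $\A$, the guesses can be controlled and the overall argument proceeds by a finite-injury priority construction.
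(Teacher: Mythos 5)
The paper offers no written proof of this proposition; it is simply asserted to follow by relativizing \cite[Corollary 4.8]{calvert_effective_2006}. Your $(\Leftarrow)$ direction is correct and is the expected argument: with the oracle $(\A\oplus\B)'$ one can decide exact sizes of bounded classes via \cref{lem:compprop}, and after non-uniformly fixing the finitely many infinite classes one can decide membership in $\Inf^\A$ and $\Inf^\B$ outright, so the back-and-forth goes through.

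The $(\Rightarrow)$ direction, however, contains a genuine gap. You propose to build a copy $\B\cong\A$ with $\B\leq_T\A$ and to defeat candidate $\Delta^{\A\oplus\B}_2$ isomorphisms by a finite-injury diagonalization. Since $\B\leq_T\A$, this amounts to defeating all $\Delta^{\A}_2$ isomorphisms between two $\A$-computable presentations, i.e.\ to refuting (non-relative) $\Delta^0_2$ categoricity relativized to $\A$ --- and that is simply false for some structures in the relevant class. As the paper itself recalls after \cref{cor:rel2iff2cat}, Kach and Turetsky~\cite{kach_categoricity_2009} built a computable equivalence structure that is $\Delta^0_2$ categorical but not relatively $\Delta^0_2$ categorical; by the very proposition under discussion such a structure has unbounded character and infinitely many infinite classes, yet every computable copy $\B$ of it admits a $\Delta^0_2=\Delta^{\A\oplus\B}_2$ isomorphism. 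Hence for that $\A$ no $\A$-computable $\B$ can witness the failure of relative $\Delta^0_2$ categoricity, and your priority construction cannot succeed no matter how the guessing of $\B$'s future values is organized. The witnessing copy must be taken sufficiently generic (in particular, not computable in $\A$), and the standard route --- which is also how \cite{calvert_effective_2006} argue, syntactically rather than by diagonalization --- is to invoke the Ash--Knight--Manasse--Slaman/Chisholm equivalence between relative $\Delta^0_2$ categoricity and the existence of a formally $\Sigma^0_2$ Scott family, and then to show that when the character is unbounded and there are infinitely many infinite classes, any $\exists\forall$ disjunct isolating the orbit of an element of an infinite class over the finitely many parameters would also be satisfied by elements of sufficiently large finite classes, a contradiction.
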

\begin{corollary}
  An equivalence structure with bounded character and infinitely many infinite equivalence classes is relatively $\Delta^0_2$ categorical but not relatively $\Delta^0_2$ bi-embeddably categorical.
\end{corollary}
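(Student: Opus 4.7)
The first assertion is immediate from \cref{prop:chardelta02}, since $\A$ has bounded character. For the negative direction, my plan is to construct a bi-embeddable copy $\B$ of $\A$ for which no $\Delta^{\A\oplus\B}_2$ embedding of $\A$ into $\B$ can exist.

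The copy $\B$ will be obtained by relativizing the construction of \cref{thm:jumpimmuneeq} to the oracle $\A$: the priority argument relativizes with no change in form once the enumeration of $\Sigma^0_2$-sets via $\jhalt_e$ is replaced by an enumeration of the $\Sigma^\A_2$-sets (and the $\A$-computable $\Sigma^\A_2$-approximations are used in place of the $\Sigma^0_2$-approximations). This yields an equivalence structure $\B\leq_T\A$ with infinitely many infinite equivalence classes and such that $Inf^\B$ is $\A'$-immune. Since $\A$ and $\B$ both contain infinitely many infinite equivalence classes, they are automatically bi-embeddable: to embed either structure into the other, one sends each equivalence class of the source into a distinct infinite equivalence class of the target, which is always possible because at each step infinitely many infinite classes remain unused.

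Now suppose toward a contradiction that $\nu\colon \A\embeds\B$ is $\Delta^{\A\oplus\B}_2$. Since $\B\leq_T\A$ we have $\A\oplus\B\equiv_T\A$, so $\nu\leq_T\A'$. By the relativization of \cref{lem:compprop}, $Inf^\A$ is $\Pi^\A_2$, and hence the least-element transversal $T$ of $Inf^\A$ is c.e.\ in $\A'$; moreover $T$ is infinite, since $\A$ has infinitely many infinite classes. Because $\nu$ preserves the equivalence relation and is injective, it sends every element of $Inf^\A$ into $Inf^\B$, so $\nu(T)$ is an infinite subset of $Inf^\B$ that is c.e.\ in $\A'$, contradicting $\A'$-immunity of $Inf^\B$. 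Hence no such $\nu$ exists, and $\A$ fails to be relatively $\Delta^0_2$ bi-embeddably categorical. The only step that needs care is verifying that the priority argument of \cref{thm:jumpimmuneeq} relativizes cleanly to an arbitrary (possibly non-computable) oracle $\A$, but this is routine since the construction uses only $\Sigma_2$-approximations and oracle-independent combinatorial bookkeeping.
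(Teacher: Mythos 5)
Your overall strategy is the right one and in fact does more than the paper, which states this corollary without proof (it is meant to follow from \cref{prop:chardelta02} together with a relativization of \cref{prop:infnotdelta02}); relativizing \cref{thm:jumpimmuneeq} to the oracle $\A$ is exactly the intended move, and the observation that $\B\leq_T\A$ gives $\A\oplus\B\equiv_T\A$ and hence $\nu\leq_T\A'$ is correct. However, there is a genuine error at the key step. You write that since $Inf^\A$ is $\Pi^\A_2$, the least-element transversal $T$ of $Inf^\A$ is c.e.\ in $\A'$. That inference is invalid: $\Pi^\A_2$ means $\Pi^{\A'}_1$, i.e.\ \emph{co}-c.e.\ in $\A'$, which gives no c.e.-in-$\A'$ presentation of $T$ or of any infinite subset of it. Indeed, the same inference applied to $\B$ (whose $Inf^\B$ is also $\Pi_2$ relative to $\B\leq_T\A$) would produce an infinite subset of $Inf^\B$ c.e.\ in $\A'$, which is exactly what the $\A'$-immunity you just constructed forbids; so the step, as justified, cannot be right.

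The step is repairable, and the repair is where the hypothesis of bounded character --- which your negative direction never invokes --- must enter. If $\A$ is $k$-bounded, then $[a]^\A$ is infinite iff $|[a]^\A|\geq k+1$, so by \cref{lem:compprop} the set $Inf^\A$ is $\Sigma^\A_1$, and hence so is $T$ (the minimality clause is an $\A$-computable bounded search). Thus $T$ is c.e.\ in $\A$, a fortiori in $\A'$, and your conclusion that $\nu(T)$ (or simply $\nu(Inf^\A)$) is an infinite subset of $Inf^\B$ that is c.e.\ in $\A'$ goes through, contradicting $\A'$-immunity. Note that without bounded character this route genuinely fails: the image of a $\Pi^\A_2$ set under a $\Delta^\A_2$ map is only $\Sigma^\A_3$, which is compatible with $\A'$-immunity of $Inf^\B$ (this is why the proof of \cref{prop:infnotdelta02} works with a structure having no finite classes at all). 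With that one fix your argument is correct and establishes the relative statement for arbitrary countable $\A$ satisfying the hypotheses.
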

\begin{corollary}\label{cor:rel2iff2cat}
  Let $\A$ be a computable equivalence structure. Then $\A$ is $\Delta^0_2$ bi-embeddably categorical if and only if it is relatively $\Delta^0_2$ bi-embeddably categorical.
\end{corollary}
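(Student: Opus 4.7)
The plan is to observe that this corollary follows immediately by chaining together the characterization in \cref{cor:delta02cat} with the sufficient condition in \cref{thm:biembdelta02}. There is essentially no new content to prove; it is a matter of unwinding the implications already established.

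For the forward direction, suppose $\A$ is $\Delta^0_2$ bi-embeddably categorical. By \cref{cor:delta02cat}, $\A$ must have finitely many infinite equivalence classes. But then \cref{thm:biembdelta02} directly gives that $\A$ is relatively $\Delta^0_2$ bi-embeddably categorical. For the reverse direction, suppose $\A$ is relatively $\Delta^0_2$ bi-embeddably categorical. Since $\A$ is itself computable, for any computable bi-embeddable copy $\B$ we have $\A \oplus \B \equiv_T \emptyset'$ at worst (indeed $\A \oplus \B$ is computable), so any $\Delta^{\A \oplus \B}_2$ embedding is already $\Delta^0_2$. Hence $\A$ is $\Delta^0_2$ bi-embeddably categorical.

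There is no real obstacle here; the only thing to be careful about is the trivial observation that for a computable structure $\A$ and any computable $\B$, relative $\Delta^0_2$ categoricity specializes to absolute $\Delta^0_2$ categoricity because $\A \oplus \B$ is computable. The substantive work has all been done in \cref{thm:biembdelta02} (constructing the $\Delta^{\A\oplus\B}_2$ embedding using the size comparison predicate) and in the two halves of \cref{cor:delta02cat} (i.e., the immunity argument of \cref{thm:jumpimmuneeq,prop:infnotdelta02}).
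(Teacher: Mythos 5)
Your proposal is correct and follows essentially the same route as the paper: both directions reduce to the common characterization ``finitely many infinite equivalence classes,'' using \cref{thm:biembdelta02} for the relative upper bound, \cref{cor:delta02cat} (via \cref{prop:infnotdelta02}) for the converse, and the trivial observation that relative $\Delta^0_2$ bi-embeddable categoricity implies the absolute notion since $\A\oplus\B$ is computable for computable copies. The only nitpick is the phrase ``$\A\oplus\B\equiv_T\emptyset'$ at worst,'' which would not suffice on its own; your parenthetical that $\A\oplus\B$ is in fact computable is what actually makes the step go through.
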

\begin{proof}
  By \cref{thm:biembdelta02} computable equivalence structures with finitely many infinite equivalence classes are relatively $\Delta^0_2$ bi-embeddably categorical and therefore, also $\Delta^0_2$ bi-embeddably categorical. It follows from \cref{prop:infnotdelta02} that these are all equivalence structures which are $\Delta^0_2$ bi-embeddably categorical. As relatively $\Delta^0_2$ bi-embeddably categorical equivalence structures have the same characterization the result follows.
\end{proof}
The analogue of \cref{cor:rel2iff2cat} does not hold for isomorphisms.
Kach and Turetsky~\cite{kach_categoricity_2009} gave an example of a $\Delta^0_2$ categorical but not relatively $\Delta^0_2$ categorical equivalence structure. Downey, Melnikov, and Ng~\cite{downey_categoricity_2015} showed that an equivalence structure $\A$ is $\Delta^0_2$ categorical iff the structure containing only one equivalence class of $\A$ for each finite size and all its infinite equivalence classes is $\Delta^0_2$-computably categorical.

We now proceed with the study of possible degrees of categoricity for equivalence structures.
\begin{definition}
  A function $f$ is \emph{limitwise monotonic} if there is a computable approximation function $h_f(\cdot,\cdot)$ such that
  \begin{enumerate}
    \item $f(x)=\lim_s h_f(x,s)$
    \item for all $x,s$ $h_f(x,s)\leq h_f(x,s+1)$
  \end{enumerate}
\end{definition}
It is not hard to see that for any limitwise monotonic function $f$, $f\leq_T \tdeg{0}'$. For more on limitwise monotonic functions and their applications see Downey, Kach, and Turetsky~\cite{downey_limitwise_2011}.
\begin{thm}\label{thm:dgcatunbounded}
  The degree of bi-embeddable categoricity of computable equivalence structures with unbounded character and finitely many infinite equivalence classes is $\tdeg{0}'$.
\end{thm}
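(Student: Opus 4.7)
The plan is to combine the upper bound from Theorem \ref{thm:biembdelta02} with a construction of a specific pair of computable bi-embeddable copies $\A_0, \A_1$ of $\A$ such that every embedding $\nu\colon\A_1\embeds\A_0$ already computes $\emptyset'$. This suffices: the upper bound gives $\leq\tdeg{0}'$, and for any pair of bi-embeddings $(\mu,\nu)$ we have $\mu\oplus\nu\geq_T\nu\geq_T\emptyset'$, witnessing both the lower bound and the fact that the degree is strong.

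For the construction, let $k\in\omega$ be the number of infinite classes of $\A$, fix a computable enumeration $(K_s)_{s\in\omega}$ of $\emptyset'$, and set $s_e=\min\{s:e\in K_s\}$ when $e\in\emptyset'$. Take $\A_0$ to be the canonical computable equivalence structure with $k$ infinite classes together with exactly one finite class $C_n^0$ of size $n+1$ for each $n\in\omega$, presented so that the class label of any element is readable by a computable function. Build $\A_1$ on universe $\omega$ in stages: it has $k$ infinite classes (enumerated directly) together with a family of finite classes $(C_e^1)_{e\in\omega}$. Start with $|C_e^1|=1$, and at any stage $s$ with $e\in K_s\setminus K_{s-1}$ enumerate $s$ fresh elements into $C_e^1$, so that $|C_e^1|=s+1$ thereafter. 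Then $|C_e^1|=s_e+1$ if $e\in\emptyset'$ and $|C_e^1|=1$ otherwise; since elements enter $\emptyset'$ at unboundedly many distinct stages, $\A_1$ has unbounded character. Thus both $\A_0$ and $\A_1$ have unbounded character and the same finite number $k$ of infinite classes as $\A$, so by the structural remark preceding Theorem \ref{thm:biembdelta02} they are bi-embeddable with each other and with $\A$.

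Given any embedding $\nu\colon\A_1\embeds\A_0$, the $k$ infinite classes of $\A_1$ must map injectively into the $k$ infinite classes of $\A_0$ and hence exhaust them; since $\nu$ reflects non-equivalence, no finite class $C_e^1$ can subsequently be mapped into an infinite class of $\A_0$. Therefore for every $e\in\omega$ there is a unique $g(e)\in\omega$ with $\nu(C_e^1)\subseteq C_{g(e)}^0$, and the size inequality $g(e)+1=|C_{g(e)}^0|\geq|C_e^1|$ yields $g(e)\geq s_e$ whenever $e\in\emptyset'$, i.e.\ $e\in K_{g(e)}$, while trivially $e\notin K_{g(e)}$ when $e\notin\emptyset'$. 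Since $g$ is computable from $\nu$ (read off the class label of $\nu(c)$ for any canonical representative $c\in C_e^1$), the equivalence $e\in\emptyset'\Leftrightarrow e\in K_{g(e)}$ gives $\emptyset'\leq_T\nu\leq_T\mu\oplus\nu$, as required. The main technical subtlety is executing the stage construction so that $\A_1$ is genuinely \emph{computable} on $\omega$ while still allowing fresh elements to be attached to a previously-declared class $C_e^1$ at an unpredictable later stage; this is routine bookkeeping via a computable pairing function, but must be spelled out to ensure the atomic diagram of $\A_1$ is decidable and that the $k$ infinite classes coexist with the stagewise-grown $C_e^1$.
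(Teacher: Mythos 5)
Your proof is correct, and its skeleton matches the paper's: both take the upper bound from the relative $\Delta^0_2$ result and get the lower bound by coding a fast-growing function into the sizes of the finite classes of one computable copy, targeting a copy with computable class sizes, and exploiting the fact that an embedding must send each class into a class of at least the same cardinality, so that the induced map on class indices is computable from the embedding and majorizes the coded function. The difference lies in the last step. The paper codes the limitwise monotonic function $f(x)=1+\sum_{i\leq x,\ \phi_i(x)\downarrow}\phi_i(x)$, which dominates every partial computable function, and invokes the domination theorem to conclude that any majorant of $f$ computes $\tdeg{0}'$; you code the settling-time function of a fixed enumeration of $\emptyset'$ and recover $\emptyset'$ directly from the modulus identity $e\in\emptyset'\Leftrightarrow e\in K_{g(e)}$. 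Your route is slightly more elementary (no appeal to the domination theorem), at the cost of the bookkeeping needed to keep $\A_1$ computable while classes grow at unpredictable stages --- which the paper's limitwise monotonic approximation handles automatically --- and it requires the (correct, but worth stating) observation that the settling times of an infinite c.e.\ set are unbounded, so $\A_1$ really has unbounded character. You also carry the $k$ infinite classes through both copies explicitly, where the paper proves the case $k=0$ and asserts the general case is a routine modification; in both arguments the same witnessing pair shows the degree is strong.
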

\begin{proof}
First notice that there are countably many bi-embeddability types of equivalence structures with unbounded character and finitely many infinite equivalence classes. Namely exactly one for each number of equivalence classes of infinite size. We prove the theorem for equivalence structures with no infinite equivalence classes. However, the argument can be easily modified to accomodate equivalence structures with finitely many infinite equivalence classes.

We define the following function.
\[f(x):= 1 + \sum_{0\leq i\leq x, \phi_i(x)\downarrow} \phi_i(x)\]
Clearly $f$ is limitwise monotonic and dominates every partial computable function. By the domination theorem (see~\cite[Theorem 4.5.4]{soare_turing_2016}) it holds that for any set $D$ such that $f\leq_T D$, $D\geq_T \tdeg{0}'$; hence, in particular, $f\equiv_T\tdeg{0}'$.

We build a computable equivalence structure $\A_f$ with universe $\omega$ and no infinite equivalence classes in stages such that
\[|[\langle x,0\rangle]^{\A_f}| = f(x).\]
Let $h_f$ be the computable approximation for $f$. At stage $0$ of the construction,  let the universe of the approximation $\A_f$ be $\omega$ and put $\langle 0,n\rangle$ in the equivalence class of $\langle 0,0\rangle$ for $n<h_f(0,0)$. At stage $s+1$ check if for any $\langle x, 0\rangle$ $x\leq s$, $|[\langle x,0\rangle]|<h_f(x,s+1)$. If so, add $\langle x,s+1\rangle$ to the equivalence class.

Now consider the equivalence structure $\A$ with universe $\bigcup_{i\in \omega} \{ \langle i,n\rangle : n\leq i \}$ and where all elements with the same left column are in the same equivalence class. This structure is clearly a computable equivalence structure bi-embeddable with $\A_f$ and computable size function $|\cdot|$. Any embedding $\nu:\A_f \ra \A$ must map $[\langle x,0\rangle]^{\A_f} \mapsto [\langle y,0 \rangle]^\A$ with $|[\langle y,0\rangle]^\A|\geq [\langle x,0\rangle]^{\A_f}=f(x)$.
Consider the function $g(x)=|[\nu(\langle x,0\rangle)]^\A|$; as $|\cdot|$ is computable, $g\equiv_T \nu$ and as $\forall x\ g(x)\geq f(x)$, $g\equiv_T \nu \geq_T \tdeg{0}'$ by the domination theorem. As by \cref{cor:delta02cat} every computable equivalence structure with finitely many infinite equivalence classes is $\Delta^0_2$ bi-embeddably categorical the theorem follows.
\end{proof}

\begin{thm}\label{thm:biembdelta03}
  Equivalence structures are relatively $\Delta^0_3$ bi-embeddably categorical.
\end{thm}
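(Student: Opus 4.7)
My plan splits on whether $\A$ has finitely or infinitely many infinite equivalence classes. In the finite case \cref{thm:biembdelta02} already delivers relative $\Delta^0_2$ bi-embeddable categoricity, which a fortiori gives relative $\Delta^0_3$. So the real content is the case where $\A$ has infinitely many infinite classes. I first observe that any bi-embeddable copy $\B$ must then also have infinitely many infinite classes, since an embedding of $\A$ into $\B$ sends distinct infinite classes of $\A$ to distinct classes of $\B$ of at least the same size, i.e.\ to distinct infinite classes.

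Now set $\mathcal{O} = (\A \oplus \B)''$. By \cref{lem:compprop}, $\Inf^\A$ is $\Pi_2^\A$ and $\Inf^\B$ is $\Pi_2^\B$, so both are $\mathcal{O}$-computable, and the least-element transversals $(a'_n)_{n\in\omega}$ of $\Inf^\A$ and $(b_n)_{n\in\omega}$ of $\Inf^\B$ are $\mathcal{O}$-computable as well. The idea is then to embed $\A$ into $\B$ by placing every equivalence class of $\A$ (finite or infinite) inside its own infinite class of $\B$; the case hypothesis guarantees enough room. Fixing a computable enumeration $(a_i)_{i\in\omega}$ of $A$, I define $\nu:\A\embeds\B$ by recursion. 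If there is a least $j<i$ with $a_i\,E^\A\,a_j$, set
\[\nu(a_i) = \mu x\in B\,[x\in [\nu(a_j)]^\B \land \forall k<i\ x\neq \nu(a_k)],\]
which exists because $[\nu(a_j)]^\B$ is infinite. Otherwise, set $\nu(a_i)=b_n$ where $n$ is the smallest index not yet used. The embedding $\mu:\B\embeds\A$ is constructed symmetrically using $(a'_n)_{n\in\omega}$.

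The only step appealing to $\mathcal{O}$ is the selection of the next unused transversal element, so $\nu$ and $\mu$ are $\Delta^{\A\oplus\B}_3$. For the verification, the first clause keeps equivalent elements of $\A$ in a common class of $\B$, while the second clause puts each new $\A$-class into a fresh $b_n$, hence into a class disjoint from all those previously used; thus $\nu$ preserves and reflects $E^\A$. The main obstacle is purely organizational, namely guaranteeing that a fresh infinite class of $\B$ is available at every stage when a new $\A$-class appears and that every target class we ever reuse still contains an unused element; both points are immediate from the fact that $\B$ has infinitely many infinite classes, all of which are infinite.
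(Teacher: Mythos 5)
Your proof is correct and follows essentially the same route as the paper: reduce to the case of infinitely many infinite classes via \cref{thm:biembdelta02}, then map every class of $\A$ into its own infinite class of $\B$, using the $\Pi_2^\B$ definability of $\Inf^\B$ (from \cref{lem:compprop}) to make the embedding $\Delta^{\A\oplus\B}_3$. You merely spell out the recursion and the injectivity bookkeeping that the paper leaves implicit.
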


\begin{proof}
By \cref{thm:biembdelta02}, equivalence structures with finitely many infinite equivalence classes are relatively $\Delta^0_2$ bi-embeddably categorical. Thus, it suffices to show that equivalence structures with infinitely many infinite classes are relatively $\Delta^0_3$ bi-embeddably categorical. Let $\A$ and $\B$ be equivalence structures with infinitely many infinite classes.  Recall that any two such equivalence structures are bi-embeddable. There is an embedding of $\A$ in $\B$ that maps every equivalence class in $\A$ to an infinite equivalence class in $\B$. As $\Inf^\B$ is $\Pi^\B_2$ there is at least one such embedding which is $\Delta^{\A\oplus\B}_3$. By the same argument there is a $\Delta^{\A\oplus\B}_3$ embedding of $\B$ in $\A$.
\end{proof}

The analogous result about classical relative $\Delta^0_3$ categoricity of equivalence structures is also true~\cite{calvert_effective_2006}, as every equivalence structure has a $\Sigma^c_3$ Scott family.

We close by proving the remaining parts of \cref{thm:dgcat}.
\begin{thm}\label{thm:dgcatinfinite}
  The degree of bi-embeddable categoricity of computable equivalence structures with infinitely many infinite equivalence classes is $\tdeg{0}''$.
\end{thm}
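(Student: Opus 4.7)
The plan is to verify both bounds separately. The upper bound is immediate from \cref{thm:biembdelta03}: since $\A$ has infinitely many infinite equivalence classes, any two computable bi-embeddable copies $\A_0, \A_1$ admit mutual embeddings that are $\Delta^{\A_0 \oplus \A_1}_3 = \Delta^0_3$, and in particular $\tdeg{0}''$-computable. Thus the degree of bi-embeddable categoricity of $\A$ is at most $\tdeg{0}''$.

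For the matching lower bound and the strong-degree claim, I note that any two computable equivalence structures with infinitely many infinite classes are bi-embeddable, so it is enough to exhibit one pair $\A_0, \A_1$ of such computable structures for which every pair of embeddings $\mu \colon \A_0 \embeds \A_1$, $\nu \colon \A_1 \embeds \A_0$ satisfies $\mu \oplus \nu \geq_T \tdeg{0}''$. I will take $\A_0$ to be a canonical computable copy (say, with exactly one finite equivalence class of each size $\geq 1$ together with a computable listing of $\omega$ many infinite classes), and I will build $\A_1$ by a priority construction extending that of \cref{thm:jumpimmuneeq} to a higher jump level. The construction combines two effects: on the one hand, as in \cref{thm:jumpimmuneeq}, $\Inf^{\A_1}$ is kept $\tdeg{0}'$-immune, which by itself only prevents any embedding into $\A_1$ from being $\tdeg{0}'$-computable; on the other hand, in the spirit of \cref{thm:dgcatunbounded}, I code a $\tdeg{0}'$-limitwise monotonic function $f$ into the growth pattern of finite classes of $\A_1$, chosen so that $f$ dominates every $\tdeg{0}'$-computable function. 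Such an $f$ exists and is $\tdeg{0}''$-computable, and by the relativized domination theorem any function pointwise dominating $f$ computes $\tdeg{0}''$; it will therefore suffice to show that from $\mu \oplus \nu$ one can compute such a dominating function.

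The main obstacle is that embeddings, unlike isomorphisms, may send finite equivalence classes into infinite ones, so $\mu$ alone need not record any size information about finite classes of $\A_1$. The partner embedding $\nu$ is what restores this information: since $\nu$ sends each finite class of $\A_1$ of size $k$ into an equivalence class of $\A_0$ of size at least $k$, and the size function of the canonical $\A_0$ is computable, the pair $(\mu,\nu)$ determines, for each finite class of $\A_1$ in the $f$-coded portion, a lower bound on its size that tracks the coded $f$-growth. The technically most involved step will be setting up the priority construction of $\A_1$ so that the $\tdeg{0}'$-immunity of $\Inf^{\A_1}$ and the $f$-coding inside $\Fin^{\A_1}$ are preserved simultaneously, and verifying that the interaction between these two families of requirements does not spoil either; once that is done, the lower bound follows from the domination argument sketched above.
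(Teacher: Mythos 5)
Your upper bound is exactly the paper's (via \cref{thm:biembdelta03}), and your reduction of the lower bound to exhibiting a single hard pair $\A_0,\A_1$ is also how the paper proceeds. The lower bound itself, however, has a decisive gap: the domination argument cannot be driven by $\nu\colon\A_1\embeds\A_0$. Since $\A_0$ must itself have infinitely many infinite equivalence classes, there is always an embedding of $\A_1$ into $\A_0$ that sends \emph{every} class of $\A_1$ --- finite or not --- into a distinct infinite class of $\A_0$; for your canonical $\A_0$, with its computable listing of infinite classes and with $E^{\A_1}$ computable, such a $\nu$ is outright computable. This $\nu$ erases all size information: $|[\nu(a)]^{\A_0}|=\infty$ for every $a$, so the ``lower bound on its size'' you propose to read off is never a natural number and no finite-valued majorant of $f$ is obtained. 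The whole burden therefore falls on $\mu\colon\A_0\embeds\A_1$, and the only requirement you impose in that direction --- $\tdeg{0}'$-immunity of $\Inf^{\A_1}$ --- gives, as you yourself note, only $\mu\not\leq_T\tdeg{0}'$, far short of $\mu\geq_T\tdeg{0}''$. A secondary problem: even where the domination argument does apply, majorizing a function that dominates all $\tdeg{0}'$-partial computable functions only yields $g\oplus\tdeg{0}'\geq_T\tdeg{0}''$ (one still needs the $\tdeg{0}'$ oracle to run the dominated computations), so you would additionally have to show $\mu\oplus\nu\geq_T\tdeg{0}'$ before concluding anything about the degree of categoricity.

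The paper's proof avoids both problems by putting all the coding on the $\mu$ side and strengthening immunity to outright coding: it builds the target structure so that the ``addresses'' of its infinite classes code initial segments of $\ol{\emptyset''}$ via a $\Pi_2$ approximation (an element $\langle i,y,0\rangle$ lies in an infinite class iff $\sigma_i\prec\ol{\emptyset''}$). Then \emph{any} infinite partial transversal of the set of elements in infinite classes computes $\tdeg{0}''$; since the second structure of the pair consists only of infinite classes, any embedding of it into the coded structure must hit infinitely many infinite classes and hence yields such a transversal. If you want to salvage your plan, replace the immunity requirement on $\Inf^{\A_1}$ by this stronger ``every infinite partial transversal of $\Inf^{\A_1}$ computes $\tdeg{0}''$'' requirement and drop the $f$-coding in $\Fin^{\A_1}$ entirely, since no embedding in either direction is forced to respect finite class sizes when both structures have infinitely many infinite classes.
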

\begin{proof}
  We first build a computable equivalence structure $\A$ with the property that any infinite partial transversal of $Inf^\A$ computes $\tdeg{0}''$. Let $(\sigma_i)_{i\in \omega}$ be a computable $1-1$ enumeration of $2^{<\omega}$ and associate to every $\sigma_i$ an infinite set of witnesses $\{ \langle i,x,y \rangle : x,y\in \omega\}$. Elements of the form $\langle i,x,0\rangle$ will serve as witnesses while all other elements will be used to grow the equivalence classes.
  We will build $\A$ using a $\Pi_2$ approximation to $\cotwo$. Let $\cotwo_s$ be the $\Pi_2$ approximation at stage $s$ of our construction.

  \noindent \emph{Construction: }\\
  \emph{Stage s=0: } Let $A=\omega$ and $E^\A = \{ (x,x) : x\in \omega\}$. Furthermore, for all strings $\sigma_i$ in our computable enumeration of $2^{<\omega}$ designate witnesses $\langle i,0,0\rangle$.

  \noindent \emph{stage s+1: } Assume we have built $\A_s$.
  \begin{enumerate}
    \item For all witnesses with left column $i<s+1$ check if for some $x$ with $\sigma_i(x)=0$, $x\in \cotwo_{s+1}$. If $\langle i, j,0\rangle$ is a witness for such $\sigma_i$, discard it (never touch its equivalence class again during the construction) and designate the witness $\langle i, s+1,0\rangle$.
    \item For any $\sigma_i$, $i<s+1$ grow the equivalence class of its designated witness $\langle i,j,0\rangle$ to match $\min \{ |\{ t : t\leq s, x\in \cotwo_t\}| : \sigma_i(x)=1\}$ using fresh elements $\langle i, j, r\rangle$ with $r>s$.
  \end{enumerate}

  \noindent\emph{Verification: } We have to show that any infinite partial transversal of $\Inf^\A$ computes $\tdeg{0}''$. The following claim establishes the crucial part.
  \begin{claim}
  $\sigma_i\prec \cotwo \LR \exists y \langle i,y,0\rangle \in Inf^\A$
  \end{claim}
  \begin{proof}
  $(\Ra)$. Assume $\sigma_i \prec \cotwo$. As we have a $\Pi_2$ approximation to $\cotwo$ there is a stage $s$ such that no $x< |\sigma_i|$, $x\not \in \cotwo$ enters $\cotwo_t$ at any stage $t>s$. Hence, by construction there is a $j<s$ such that $\langle i,j,0\rangle$ has infinite equivalence class.

  $(\La)$. Assume $|[\langle i,j,0\rangle]|=\omega$ and let $\tau\prec \cotwo$ with $|\tau|=|\sigma_i|$. Assume, for some $x$, $\tau(x)=0$ and $\sigma_i(x)=1$. Then there are only finitely many stages $t$ such that $x\in \cotwo_t$. Thus, by construction, no equivalence class of elements with $i$ in the left column can become infinite. Therefore, if $\tau(x)=0$, then $\sigma_i(x)=0$ as well.
  Now assume that $\tau(x)=1$ and $\sigma_i(x)=0$. Then there are infinitely many stages $s$ such that $x\in \cotwo_s$.
  Hence, by construction, there can not be finite $j$ such that $|[\langle i,j,0\rangle]|=\omega$ because for any $j$ if $\langle i,j,0\rangle$ is designated at step $s$ there is a step $t>s$ such that $x\in \cotwo_t$. Let $t_0$ be the least such step, then $\langle i,j+1,0\rangle$ will be designated at step $t_0$ and no new elements will be added to the equivalence class of $[\langle i,j,0\rangle]$ at any stage $t\geq t_0$. Thus $[\langle i,j,0\rangle]$ is finite, $\tau(x)=1\Ra \sigma_i(x)=1$ and hence $\tau=\sigma$.
  \end{proof}
  Consider an infinite partial transversal $T$ of $Inf^\A$. To check whether some fixed $x\in \tdeg{0}''$, consider an enumeration of $T$. As all elements in $T$ code an initial segment of $\cotwo$ in their left column there is a finite stage $t$ and $y,z$, such that $\langle i,y,z\rangle \in T_t$ and $|\sigma_i|\geq x$. Hence, $x\in \tdeg{0}'' \LR \sigma_i(x)=0$ and we can find $\sigma_i$ uniformly. Therefore, $T\geq_T \tdeg{0}''$.

  Now consider the structure $\B$ with universe $\omega$ and where $\forall x\forall n \ \langle x, n \rangle \in [\langle x,0 \rangle]^\B$. It is a computable equivalence structure consisting only of infinite equivalence classes and  it clearly embeds into $\A$. To compute $\tdeg{0}''$ from any embedding $\nu: \B \embeds \A$ look at the strings coded by the left column of the images of elements of the form $\langle x,0\rangle$. By the above argument, after enumerating a finite number of images of such elements we can decide whether $x\in \tdeg{0}''$, hence $\nu \geq_T \tdeg{0}''$. Since, by \cref{thm:biembdelta03}, any equivalence structure is relatively $\Delta^0_3$ bi-embeddably categorical the theorem follows.
\end{proof}

At last we put together the pieces that prove \cref{thm:dgcat}.
\begin{proof}[Proof of \cref{thm:dgcat}]
  \cref{item:thmdgcat_1} follows directly from \cref{thm:charbecat}. \cref{thm:dgcatunbounded} proves that $\tdeg{0}'$ is the degree of bi-embeddable categoricity of equivalence structures with unbounded character and finitely many infinite equivalence classes. The two equivalence structures $\A_f$ and $\A$ constructed in the proof witness that $\tdeg{0}'$ is a strong degree of bi-embeddable categoricity for equivalence structures with unbounded character and no infinite equivalence classes. Similar structures can be easily constructed for equivalence structures with any finite number of infinite classes. This proves \cref{item:thmdgcat_2}.
  \cref{thm:dgcatinfinite} shows that $\tdeg{0}''$ is the degree of bi-embeddable categoricity of equivalence structures with infinitely many infinite classes. To see that it is a strong degree of bi-embeddable categoricity, consider the structures $\A$ and $\B$ constructed in the proof. Any embedding $\nu: \B \embeds \A$ computes $\tdeg{0}''$, hence, for any $\mu:\A\embeds\B$, $\mu \oplus \nu \geq_T \tdeg{0}''$.
\end{proof}
 \section{Index sets}\label{sec:index}Let $(\C_e)_{e\in \omega}$ be an enumeration of the partial computable equivalence structures, i.e., given a computable function $\phi_e: \omega\times \omega \ra \{0,1\}$, $\C_e$ has universe $\omega$ and
\[ xE^{\C_e}y :\LR \phi_e(x,y)=1.\]
Note that it is $\Pi^0_1$ to check whether $\C_e$ is indeed an equivalence structure.

We say that a set is $D^0_n$ if it is the difference of two $\Sigma_n$ sets, or equivalently, the intersection of a $\Sigma^0_n$ and a $\Pi^0_n$ set.
We make the following assumptions without loss of generality.
\begin{enumerate}
  \item Let $P$ be a $\Pi^0_n$ set, $n>1$, and $S$ a $\Sigma^0_{n-1}$ set such that
  \[ p\in P \LR \exists s \langle s,p\rangle\in S,\]
  then the witness $s$ is unique.
  \item Let $S$ be a $\Sigma^0_n$ set, $n>1$, and $P$ a $\Pi^0_{n-1}$ set such that
  \[ s\in S \LR \forall p \langle p,s\rangle \in S,\]
  then $\langle p,s\rangle \not \in S \Ra \langle p+1,s\rangle \not \in S$, i.e., the left columns form initial segments of $\omega$.
\end{enumerate}
We start by recalling a simple observation.
\begin{lemma}\label{lem:betypecompcat}
  For computably bi-embeddably categorical equivalence structures $\A$ we have that $\B\biemb \A$ if and only if
  \begin{enumerate}
    \item $\B$ has the same number of infinite equivalence classes as $\A$,
    \item $\B$ has the same bound as $\A$,
    \item and if $\A$ has infinitely many equivalence classes of size $n$ and for all $k>n$ there are only finitely many equivalence classes of size $k$, then for $m\geq n$, $\B$ has the same number of equivalence classes of size $m$ as $\A$.
  \end{enumerate}
\end{lemma}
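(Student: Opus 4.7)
My plan is to exploit the rigidity of embeddings on equivalence classes: any embedding sends distinct equivalence classes to distinct equivalence classes and maps each class injectively into one of at least the same size. By \cref{thm:charbecat}, a computably bi-embeddably categorical $\A$ has bound $k$ and finitely many infinite classes; I will use this throughout.

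For the forward direction I would fix embeddings $\mu\colon\A\embeds\B$ and $\nu\colon\B\embeds\A$ and introduce the auxiliary counts $N_i^\C$, where $N_i^\C$ is the (possibly infinite) number of equivalence classes of $\C$ of size $\geq i$. Each embedding induces an injection from the size-$\geq i$ classes of the domain into those of the codomain, so $N_i^\A\leq N_i^\B$ and symmetrically, hence $N_i^\A=N_i^\B$ for every $i\geq 1$. Condition (1) then falls out by taking $i>k$, so that only infinite classes contribute. Condition (2) follows by reading off the bound as the largest $i$ for which $N_i^\C$ exceeds the number of infinite classes. For (3), if the hypothesized $n$ exists, the differences $c_m^\C := N_m^\C - N_{m+1}^\C$ are well-defined and finite for every $m>n$, so $c_m^\A=c_m^\B$; and $c_n^\A=\infty$ together with the finiteness of $N_{n+1}^\B$ forces $c_n^\B=\infty$.

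For the backward direction I would construct an embedding $\nu\colon\A\embeds\B$ (with the symmetric argument providing $\mu\colon\B\embeds\A$) in three stages, assuming the $n$ of (3) exists. First, bijectively match classes of size $m$ on both sides for every $m>n$ and match the infinite classes to each other; this is possible because (1) and (3) guarantee equal finite counts at each such size. Second, inject the infinitely many size-$n$ classes of $\A$ into those of $\B$, deliberately leaving infinitely many size-$n$ classes of $\B$ unused. Third, inject the at most countably many classes of $\A$ of size $<n$ into this reservoir of unused size-$n$ classes of $\B$, which can absorb them since each target class has size $\geq n$ and distinct source classes go to distinct reservoir members. When no such $n$ exists, bounded character forces $\A$ to have only finitely many finite classes, and (1) together with (2) reduces the verification to a direct finite matching between $\A$ and $\B$.

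The main obstacle is the bookkeeping at size $n$: the reservoir must be kept infinite so that every smaller class of $\A$ has a distinct landing slot in $\B$. This is where $k$-boundedness is essential, as it bounds the number of sizes below $n$ by a finite constant, so that the injected classes of size $<n$, though possibly countable in number, never exhaust the infinite pool of unused size-$n$ classes reserved in stage two.
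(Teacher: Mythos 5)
The paper does not actually prove this lemma (it is introduced as ``a simple observation''), so there is no proof of record to compare against. On its own merits, your forward direction and the main case of your backward direction are correct: the counting argument via $N_i^{\mathcal{C}}$ is exactly the right invariant, and the three-stage construction (bijective matching above $n$, injection at size $n$ leaving an infinite reservoir, absorption of the smaller classes into that reservoir) is sound, as is its symmetric counterpart for $\B\embeds\A$.

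The genuine gap is your last case, where no size occurs infinitely often in $\A$. There you assert that (1) and (2) ``reduce the verification to a direct finite matching,'' but (1) and (2) place no constraint at all on how many finite classes $\B$ has below the bound, and the biconditional fails there as literally stated. Concretely: let $\A$ consist of one infinite class and one class of size $3$, and let $\B$ consist of one infinite class, one class of size $3$, and three classes of size $2$. Both have bound $3$ and one infinite class, and condition (3) is vacuous; yet $\B$ has five pairwise inequivalent elements while $\A$ has only two equivalence classes, so $\B$ does not embed into $\A$. Your own forward-direction analysis identifies what is missing: one needs $N_i^\A=N_i^\B$ for every $i$, which in this degenerate case amounts to requiring that $\B$ have the same (finite) number of classes of each finite size as $\A$. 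The intended reading is presumably that (3) also applies with $n=0$ (compare the proof of \cref{thm:charbecat}, where ``$l$ might be $0$''), and indeed the defining formulas in \cref{thm:dgcompindex} quietly impose exactly this extra clause. You should make that convention explicit and run your stage-one bijective matching over all sizes in this case, rather than claiming that (1) and (2) alone suffice.
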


\begin{thm}\label{thm:dgcompindex}
   Let $\A$ be a computable, computably bi-embeddably categorical equivalence structure.
   \begin{enumerate}
     \item\label{it:degcompindex1}  If $\A$ is finite then the index set $\{ \C_e:  \C_e\biemb \A \}$ is $D^0_1$-complete.
     \item\label{it:degcompindex2}  If $\A$ has infinitely many equivalence classes of size $n$ for some $n<\omega$, and no infinite equivalence classes, then the index set $\{ \C_e:  \C_e\biemb \A \}$ is $\Pi^0_2$-complete.
     \item\label{it:degcompindex3}  If $\A$ has $r>0$ infinite equivalence classes, then the index set $\{\C_e:\C_e\biemb \A\}$ is $\Pi^0_2$-complete.
   \end{enumerate}
 \end{thm}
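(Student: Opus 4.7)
The plan is to invoke \cref{lem:betypecompcat} to translate the bi-embeddability relation $\C_e \biemb \A$ into finitely many parametric conditions on $\C_e$ (about the number of infinite classes, the bound, and the count of finite classes of each size) and then to bound their arithmetical complexity via \cref{lem:compprop}.

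For the upper bounds, in each of the three cases the conditions describing $\C_e \biemb \A$ decompose into: (i) $\C_e$ is a valid equivalence structure, a $\Pi^0_1$ property; (ii) $\C_e$ has the same bound $k$ as $\A$; (iii) $\C_e$ has the same (finite) number of infinite classes as $\A$; and (iv) when applicable, $\C_e$ has the prescribed finite count of classes of each size $m \geq n$. The key reformulation is that, in the presence of the bound $k$, the predicate $|[x]| = m$ for $m \leq k$ is $\Delta^0_2$, so (iv) sits at the level $\Sigma^0_2 \cap \Pi^0_2$; condition~(ii) is $\Pi^0_2$; and having exactly $r$ infinite classes is $\Delta^0_2$ once $k$-boundedness is known, because being infinite reduces to having size $>k$, turning both halves of the counting condition into $\Sigma^0_1$ and $\Pi^0_1$ respectively. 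In case~\cref{it:degcompindex1}, where $\A$ is a fixed finite configuration, the conditions collapse to a single $\Sigma^0_1$ clause asserting that the finite diagram of $\A$ appears inside $\C_e$ together with a $\Pi^0_1$ clause forbidding any further non-trivial equivalences beyond that copy, yielding the $D^0_1$ upper bound. In cases~\cref{it:degcompindex2} and~\cref{it:degcompindex3} the combination yields $\Pi^0_2$; for~\cref{it:degcompindex2} we additionally use that, modulo the $\Sigma^0_2\cap\Pi^0_2$ count conditions for sizes $m>n$, the statement ``$\C_e$ has infinitely many classes of size $n$'' is equivalent to ``there are arbitrarily large families of pairwise inequivalent elements of class size at least $n$'', which is plainly $\Pi^0_2$.

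For the lower bounds we uniformly $m$-reduce a complete set of the appropriate complexity into $\{\C_e : \C_e \biemb \A\}$. For~\cref{it:degcompindex1} we reduce a $D^0_1$-complete set $S_1 \setminus S_2$ with $S_1, S_2$ c.e.: given input, we build $\C_{f(e)}$ whose positive actions (triggered by enumeration into $S_1$) lay down the prescribed finite diagram of $\A$, and whose negative actions (triggered by enumeration into $S_2$) adjoin a single forbidden extra equivalence. For~\cref{it:degcompindex2} we reduce the $\Pi^0_2$-complete set $\{e : W_e \text{ is infinite}\}$: starting from a computable skeleton with the correct finite counts for all sizes $m\ne n$, we introduce one fresh equivalence class of size exactly $n$ each time a new element enters $W_e$, so that $W_e$ is infinite iff $\C_{f(e)}$ has infinitely many classes of size $n$ iff $\C_{f(e)} \biemb \A$. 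For~\cref{it:degcompindex3} we similarly reduce $\{e : W_e \text{ is infinite}\}$ by building $\C_{f(e)}$ with $r-1$ classes that we deliberately grow to infinite size plus one ``waiting'' class to which we add a fresh element at each stage where $W_e$ gains an element; if $W_e$ is infinite the waiting class becomes infinite and $\C_{f(e)} \biemb \A$, and otherwise it stabilises at finite size so that $\C_{f(e)}$ has only $r-1$ infinite classes, whence condition~(1) of \cref{lem:betypecompcat} fails.

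The main technical subtlety is the $\Pi^0_2$ upper bound in~\cref{it:degcompindex2}: a naive formalisation of ``infinitely many classes of size exactly $n$'' lands in $\Pi^0_3$, because $|[x]|=n$ is $\Delta^0_2$ rather than $\Delta^0_1$ and the natural $\forall m\exists x_1,\dots,x_m$ quantification leaves a $\Sigma^0_2$ matrix. The $\Pi^0_2$ bound genuinely relies on the bookkeeping sketched above: the finite count conditions for sizes $m>n$ combine with the $\Pi^0_2$ predicate ``infinitely many classes of size $\ge n$'' to pin down the number of size-$n$ classes without ever quantifying through the $\Delta^0_2$ size predicate.
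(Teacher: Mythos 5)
Your overall strategy --- extracting an explicit arithmetical definition from \cref{lem:betypecompcat} for the upper bounds, and reducing a $D^0_1$-complete set resp.\ $\Inf=\{e:W_e\text{ infinite}\}$ for the lower bounds --- is the same as the paper's, and your reductions for \cref{it:degcompindex1} and \cref{it:degcompindex2} match the paper's constructions essentially verbatim. Two points, however, do not close as written. First, the $\Pi^0_2$ upper bound: you place the exact counts of finite classes of each size $m$ at ``$\Sigma^0_2\cap\Pi^0_2$'' by routing them through the $\Delta^0_2$ predicate $|[x]|=m$. But ``there exist $c$ pairwise inequivalent elements with $|[x_i]|=m$'' is a $\Sigma^0_2$ statement with no evident $\Pi^0_2$ form, so conjoining it with its dual yields only a $D^0_2$ condition, and a $D^0_2$ conjunct destroys the $\Pi^0_2$ shape of the whole formula. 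Your closing paragraph repairs this for the size-$n$ classes by passing to ``infinitely many classes of size $\ge n$,'' but the same repair is needed for every $m>n$: one must count classes of size $\ge m$, for which ``at least $c$'' is $\Sigma^0_1$ and ``at most $c$'' is $\Pi^0_1$, so that each exact count is a Boolean combination of $\Sigma^0_1$ facts, hence $\Delta^0_2\subseteq\Pi^0_2$. (The paper instead pins down the finite part with one $\Sigma^0_1$ existential for its atomic diagram together with $\Pi^0_2$ clauses bounding the number of elements lying in small classes; either bookkeeping works, but some such device is needed.)

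Second, your reduction for \cref{it:degcompindex3} builds only the $r-1$ permanently infinite classes and the one ``waiting'' class, omitting the finite part of $\A$. If $\A$ has infinitely many finite classes (say, infinitely many of size $3$, which is consistent with computable bi-embeddable categoricity), then even when $W_e$ is infinite your $\C_{f(e)}$ has only $r$ equivalence classes in total; since embeddings send distinct classes to distinct classes, $\A\not\embeds\C_{f(e)}$ and the reduction fails. As in your case \cref{it:degcompindex2} and as in the paper, you must take $\C_{f(e)}$ to be the disjoint union of the waiting machinery with a fixed computable copy of the finite part of $\A$; this is harmless when $W_e$ is finite because non-bi-embeddability is then already witnessed by the deficit in the number of infinite classes.
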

 \begin{proof}
    Ad \cref{it:degcompindex1}. Assume $\A$ is finite, say $|A|=m$. Let $\theta_\A$ be the formula obtained from the atomic diagram by replacing the constants from $A$ by variables. Then the index set is definable by the following $D^0_1$ formula.
\[\C_e\biemb \A  \LR \C_e \cong \A \LR \exists x_1,\dots x_m (\theta_\A(x_1,\dots, x_m)) \land \forall x_1,\dots x_{m+1} (\bigvee_{1\leq i<j\leq m+1} x_i=x_j).\]

    To see that the index set is $D^0_1$-hard, we define a computable function $g$ such that $\C_{g(e,i)}\biemb \A$ if and only if $e\in \emptyset'$ and $i\in \ol{\emptyset'}$. Fix an element $a$ of $\A$. Let $\B$ be a computable equivalence structure isomorphic to $\A\setminus [a]^\A$; say $|[a]^\A|=n$. We build a computable function $f$ and a structure $\E_{f(e,i)}$ disjoint from $\B$ in stages such that $\C_{g(e,i)}=\B \oplus \E_{f(e,i)}$.

    Let $\emptyset'_s, \E_{f(e,i),s}$ be the approximations to $\emptyset'$ and $\E_{f(e,i)}$, respectively, at stage $s$. Let $\E_{f(e,i),0}=\emptyset$ and assume we have defined $\E_{f(e,i),s}$. To define $\E_{f(e,i),s+1}$ check if (i) $e\enters \emptyset'_s$ and (ii) $i\enters \emptyset'_s$.
    The structure $\E_{f(e,i),s+1}$ extends $\E_{f(e,i),s}$ as follows. If (i), add a new equivalence class of size $n$ to $\E_{f(e,i),s+1}$  by using the elements $2(s+j)$ for $j\in 1,\dots, n$.
If (ii), add a new equivalence class of size $n+1$ by using the elements $2(s+j)+1$ for $j\in 1,\dots n+1$.

    Let $\E_{f(e,i)}=\lim_s\E_{f(e,i),s}$. It is now easy to see that $\C_{g(e,i)}=\B\oplus \E_{f(e,i)}$ is bi-embeddable with $\A$ (in this case even isomorphic) if $e\in \emptyset'$ and $i\in \ol{\emptyset'}$.

    Ad \cref{it:degcompindex2}. Assume $n$ is the maximal size such that $\A$ has infinitely many equivalence classes of size $n$. Let $\A_{>n}$ be the substructure of $\A$ restricted to classes bigger than $n$. Then $\A_{>n}$ is finite. The index set is definable by the following $\Pi^0_2$ formula.
\[\C_e \biemb \A \LR \forall x \exists y>x  ([y]^{\C_e}\geq n \land \forall z<y\  \neg zEy )\land \C_{e,>n}\cong\A_{>n}\]
    To see that it is hard, consider the $\Pi^0_2$ complete set $\Inf=\{ e: W_e \text{ is infinite}\}$. We will build a computable function $g$ such that $\C_{g(e)}\biemb \A \LR e\in \Inf$. Fix a computable equivalence structure $\B$ isomorphic to $\A_{>n}$. Our desired structure $\C_{g(e)}$ will be the disjoint union of $\B$ with the structure $\E_{f(e)}$, where $f$ is a computable function constructed as follows. Let $W_{e,s}$ be the computable approximation to $W_{e}$ after $s$ stages of our construction; we make the standard assumption that $x\in W_{e,s}\Ra x<s$. Assume we have defined $\E_{f(e),s}$ and are at stage $s+1$ of the construction. The structure $\E_{f(e),s+1}$ extends $\E_{f(e),s}$ by a new equivalence class of size $n$ for every $x\enters W_{e,s}$, i.e., if $x\enters W_{e,s}$ let $\langle x, s\rangle ,\dots ,\langle x,s+n\rangle\in \E_{f(e),s+1}$ and set them to be equivalent.
    This finishes the construction; let $\E_{f(e)}=\lim_s \E_{f(e),s}$.

    By construction $\E_{f(e)}$ has only equivalence classes of size $n$ and it has infinitely many of those if and only if $W_e$ is infinite. As $\B\cong \A_{>n}$, we have that \[\C_{g(e)}=\B \oplus \E_{f(e)} \biemb \A \LR e\in \Inf.\] Thus the index set is $\Pi^0_2$ complete.

    Ad \cref{it:degcompindex3}. To see that it is in $\Pi^0_2$ one has to consider two cases. Either the finite part of $\A$ is as in \cref{it:degcompindex1} or as in \cref{it:degcompindex2}. In any case, let $k$ be the bound. If we are in case \cref{it:degcompindex1}, let $m$ be the number of elements in the finite part of $\A$. If we are in case \cref{it:degcompindex2}, let $m$ be the number of elements in the finite part of $\A$ restricted to equivalence classes bigger than $n$, where $n$ is as above.

If we are in case \cref{it:degcompindex1} we can define the index set by
    \begin{multline}\tag{$\ast$}\label{eq:defformula} \C_e\biemb \A \LR
      \forall y_1,\dots,y_{m+1}(\bigwedge_{1\leq i\leq m+1} [y_i]\leq k \ra \bigvee_{1\leq i<j\leq m+1} y_i=y_j)\\
      \land\forall y_1, \dots, y_{r+1} (\bigwedge_{1\leq i\leq r+1} [y_i]>k \ra \bigvee_{1\leq i<j \leq r+1}  y_i E y_j)\land  \exists x_1,\dots, x_m (\theta_{\A_{fin}}(x_1,\dots, x_m))\\
      \land \forall y_1,\dots ,y_m ((\bigwedge_{1\leq i \leq m} [y_i]\leq k\wedge\bigwedge_{1\leq i<j\leq m} y_i\neq y_j) \ra \Theta_{\A_{fin}}(y_1,\dots, y_m))\\
      \land \exists x_1,\dots, x_r (\bigwedge_{1\leq i \leq r} [x_i]>k \land \bigwedge_{1\leq i<j\leq r} \neg x_i E x_j) \land \forall x ([x]>k \ra \exists y>x\ yEx)
      \end{multline}
    where $\theta_{\A_{fin}}$ is the formula obtained from the atomic diagram of the finite part of $\A$ by replacing the constants by variables and $\Theta_{\A_{fin}}$ is the disjunction over $\theta_{\A_{fin}}(x_1,\dots,x_m)$ permuting over all variables. Let the formula in \cref{eq:defformula} be $\phi_{\A}$. If we are in case \cref{it:degcompindex2} the defining formula is
      \[ \C_e \biemb \A \LR \forall x \exists y>x  ([y]^{\C_e}\geq n) \land \phi_{\A_{>n}}'\]
    where $\phi_{\A_{>n}}'$ is as above with the slight difference that the second and third universal quantifiers now range over all $x$ where $n<[x]\leq k$ instead of only $[x]\leq k$. The two formulas are easily seen to be $\Pi^0_2$.

    For the hardness consider the $\Pi^0_2$ complete set $\Inf=\{e: W_e\text{ is infinite}\}$ and fix a computable structure $\B$ without infinite equivalence classes isomorphic to the finite part of $\A$. We build a computable function $g$ such that
    \[\C_{g(e)}=\B\oplus \E_{f(e)}\biemb \A \LR e\in \Inf\]
    where $f$ is again a computable function. We proof the hardness for the case that $\A$ has one infinite equivalence class, the case for $r>1$ is similar. The construction of $\E_{f(e)}$ is in stages, at stage $0$ the universe of $\E_{f(e),0}$ is empty. Assume we have defined $\E_{f(e),s}$ and are at stage $s+1$ of the construction.

    For any $x<s$ such that $x\enters W_{e,s}$, add $\langle x,s+1\rangle$ to $\E_{f(e),s+1}$ and make it equivalent to all elements already in $\E_{f(e),s}$. It is easy to see that the structure $\E_{f(e)}=\lim_s \E_{f(e),s}$ has an infinite equivalence class if and only if $e\in \Inf$ and thus $\C_{g(e)}\biemb \A$ if and only if $e\in \Inf$.
 \end{proof}
\begin{thm}\label{thm:indexsetunbounded}
  Let $\A$ be an equivalence structure with degree of categoricity $\tdeg{0}'$. Then the following holds.
  \begin{enumerate}
    \item\label{it:thmindexunb1} If $\A$ has no infinite equivalence classes, then the index set $\{\C_e : \C_e \biemb \A\}$ is $\Pi^0_3$-complete.
    \item\label{it:thmindexunb2} If $\A$ has $0<k<\omega$ infinite equivalence classes, then the index set $\{\C_e : \C_e \biemb \A\}$ is $D^0_3$-complete.
    \end{enumerate}
\end{thm}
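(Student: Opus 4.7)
The plan is to handle upper bounds by direct formula analysis and lower bounds by effective reductions from complete sets.

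For the upper bounds: in case \cref{it:thmindexunb1}, $\C_e \biemb \A$ iff $\C_e$ has unbounded character and no infinite equivalence class. The first is $\Pi^0_2$, while the second is the $\Pi^0_3$ statement that every element of $\C_e$ lies in $\Fin^{\C_e}$ (using the uniform $\Sigma^0_2$ definition of $\Fin^{\C_e}$ from \cref{lem:compprop}), so the conjunction is $\Pi^0_3$. In case \cref{it:thmindexunb2}, having $k \geq 1$ infinite classes automatically implies unbounded character, so $\C_e \biemb \A$ iff $\C_e$ has exactly $k$ infinite classes. This is the intersection of the $\Sigma^0_3$ statement ``$\geq k$ infinite classes'' and the $\Pi^0_3$ statement ``$\leq k$ infinite classes'', hence $D^0_3$.

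For the lower bound in case \cref{it:thmindexunb1}, I would reduce from the $\Pi^0_3$-complete set $P = \{e : \forall n\ W_{h(e,n)} = \omega\}$ for a suitable $h$ obtained via $s$-$m$-$n$ from an arbitrary $\Pi^0_3$ predicate. Given $e$, construct $\C_{g(e)}$ computably as a disjoint union of equivalence classes $\{C_k : k \geq 1\}$, each initialized as a singleton. At stage $s$, for each $k \leq s$ with $|C_{k,s}| < k$, add a fresh element to $C_k$ precisely when $\{0, 1, \ldots, k-1\} \subseteq \bigcap_{n \leq k} W_{h(e,n), s}$. Monotonicity of enumeration ensures that $|C_k|$ reaches $k$ iff $\{0, \ldots, k-1\} \subseteq W_{h(e,n)}$ for every $n \leq k$, otherwise $|C_k|$ stabilizes at some size below $k$. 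The cap $|C_k| \leq k$ prevents any infinite class from being formed, and $\C_{g(e)}$ is unbounded iff infinitely many $C_k$ reach size $k$, which is equivalent to $\forall n\ W_{h(e,n)} = \omega$, i.e., to $e \in P$.

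For the lower bound in case \cref{it:thmindexunb2}, I would reduce from the $D^0_3$-complete set $S = \{(e_1, e_2) : W_{e_1}$ is cofinite $\land\ W_{e_2}$ is not cofinite$\}$. Build $\C_{g(e_1, e_2)}$ as the disjoint union of three components: (i) a fixed computable equivalence structure with exactly $k-1$ infinite classes and unbounded finite character; (ii) a component $\mathcal{V}_2$ with classes $D^{(2)}_j$ for $j \in \omega$ (each a singleton initially) where at each stage $s$ we set $j_s = \mu j < s\, [\{j+1, \ldots, s\} \subseteq W_{e_1, s}]$ when this exists and add a fresh element to $D^{(2)}_{j_s}$; and (iii) a component $\mathcal{V}_3$ constructed analogously with respect to $W_{e_2}$, except that at each stage $s$ a fresh element is simultaneously added to $D^{(3)}_{j_s, l}$ for every $l \leq s$. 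If $W_{e_1}$ is cofinite with $a = \max(\omega \setminus W_{e_1})$, the minimality of $j_s$ forces $j_s$ to stabilize at $a$, making $D^{(2)}_a$ the unique infinite class of $\mathcal{V}_2$; otherwise $\mathcal{V}_2$ contains no infinite class. Analogously $\mathcal{V}_3$ contains infinitely many infinite classes iff $W_{e_2}$ is cofinite, and none otherwise. Summing the contributions across the three components, $\C_{g(e_1, e_2)}$ has exactly $k$ infinite classes iff $(e_1, e_2) \in S$.

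The main technical hurdle in both reductions is controlling the exact number of infinite classes: in \cref{it:thmindexunb1} the per-class cap $|C_k| \leq k$ is essential so that failure of the $\Pi^0_3$ condition produces bounded character rather than spurious infinite classes, and in \cref{it:thmindexunb2} the least-witness priority in $\mathcal{V}_2$ is essential to yield exactly one infinite class (rather than cofinitely many) when $W_{e_1}$ is cofinite.
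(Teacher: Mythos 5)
Your upper bound for \cref{it:thmindexunb1} is fine, but for \cref{it:thmindexunb2} the claim that ``having $k\geq 1$ infinite classes automatically implies unbounded character'' is false: the structure consisting of exactly $k$ infinite classes and nothing else has bounded character and exactly $k$ infinite classes, yet is not bi-embeddable with $\A$, since the finite classes of $\A$, having unbounded sizes, must be sent to infinitely many distinct classes of $\C_e$ of unbounded size and only $k$ of those can be infinite. The correct characterization is ``exactly $k$ infinite classes \emph{and} unbounded character''; the missing conjunct is $\Pi^0_3$, so the $D^0_3$ upper bound survives, but your formula as written defines a strictly larger set than the index set.

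The serious gap is the hardness argument for \cref{it:thmindexunb1}. The set $P=\{e:\forall n\ W_{h(e,n)}=\omega\}$ is not $\Pi^0_3$-complete for any computable $h$: ``$W_i=\omega$'' is $\Pi^0_2$, and a universal number quantifier over a $\Pi^0_2$ matrix is still $\Pi^0_2$. The construction mirrors this defect: by capping $|C_k|\leq k$ you guarantee that $\C_{g(e)}$ never acquires an infinite class, so the only property of $\C_{g(e)}$ that varies with $e$ is unboundedness, which (in the absence of infinite classes) is a $\Pi^0_2$ property; no reduction of this shape can establish $\Pi^0_3$-hardness. The $\Pi^0_3$ content of the index set lives entirely in ``every class is finite,'' so the reduction must threaten to build infinite classes: write the given $\Pi^0_3$ predicate as $\forall x\,(\{r:\langle x,p,r\rangle\in R\}\text{ is finite})$ for a computable $R$, using the $\Sigma^0_2$-completeness of $\Fin$, start from a structure that is unbounded outright (one class of each finite size), and add one element to a designated class each time a new $r$ with $\langle x,p,r\rangle\in R$ appears, so that an infinite class is created exactly when the predicate fails; this is what the paper does. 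Your reduction for \cref{it:thmindexunb2} has the right shape (a $\mathrm{Cof}\times\overline{\mathrm{Cof}}$ reduction glued to a fixed unbounded structure with $k-1$ infinite classes, analogous to the paper's simultaneous $\Sigma^0_3$/$\Pi^0_3$ construction), but the rule $j_s=\mu j<s\,[\{j+1,\dots,s\}\subseteq W_{e_1,s}]$ has a timing bug: for a slowly enumerated cofinite set (even $W_{e_1}=\omega$ with $x$ entering only at stage $2x$) the condition can fail at every stage, so $\mathcal{V}_2$ gets no infinite class; you need the standard movable-marker bookkeeping, growing $D^{(2)}_j$ whenever the verified interval $\{j+1,\dots,m\}\subseteq W_{e_1,s}$ lengthens, rather than demanding the whole segment up to $s$ be present at stage $s$ itself.
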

\begin{proof}
  Assume that $\A$ has $0<k<\omega$ infinite equivalence classes. Then the index set is definable by
\begin{multline*}\C_e\biemb \A \LR \forall x (x\in \Fin^{\C_e} \ra \exists y ( y\in \Fin^{\C_e} \land [y]\geq [x]))\\
  \land \exists x_1,\dots, x_k\in \Inf^{\C_e} (\bigwedge_{1 \leq i<j\leq k}\neg x_i E x_j) \\
  \land \forall x_0, x_1, \dots, x_k \in \Inf^{\C_e} ( \bigvee_{1\leq i <j\leq k} x_i E x_j).
\end{multline*}
The part of the formula defining the finite equivalence classes is $\Pi^0_3$ and the part defining the infinite equivalence classes is $\Sigma^0_3$. Thus, the formula is $D^0_3$. If $\A$ has no infinite equivalence classes then the part of the defining formula defining the infinite classes becomes $\forall x \ x\in Fin^{\C_e}$, a $\Pi^0_3$ formula. Hence, in this case the formula is $\Pi^0_3$.

To show the completeness of \cref{it:thmindexunb2} we will define a computable function $f$ such that for every $\Pi^0_3$ set $P$ and every $\Sigma^0_3$ set $S$
\[\C_{f(p,e)}\biemb \A \LR p\in P \land e\in S.\]
There is a $\Pi^0_2$ set $Q$ such that for all $e$
\[e\in S\LR \exists x \langle x,e\rangle \in Q.\]
Then, as $\Inf=\{e: W_e\text{ is infinite}\}$ is $\Pi^0_2$ complete we have that there is a computable set $T$ such that
\[
     \langle x,e\rangle \in Q \LR \{t:\langle x,e,t\rangle \in T\} \text{ is infinite.}
\]
In the same vein, as $\Fin=\overline{\Inf}$ is $\Sigma^0_2$ complete we have that there is a computable set $R$ such that
\[
      p \in P \LR \forall x\left( \{ r: \langle x,p,r\rangle \in R\} \text{ is finite}\right).
\]
We build the structure $\C_{f(p,e)}$ in stages.
\medskip

\noindent\emph{Construction:}

\noindent\emph{Stage $0$:} The universe of $\C_{f(p,e)}$ is $\omega$ and $\C_{f(p,e)}$ has exactly one equivalence class for each finite size, i.e., for each $x\in \omega$, we set $\langle 2x,0,0\rangle E \langle 2x,0, i\rangle$ for $i\in 0,\dots,x$. All other elements are singletons.

\noindent\emph{Stage $s+1=2j$:} For every $x<j$ check whether $\langle x,p,j\rangle\in R$. If so set $\langle 2x,0, 0\rangle E \langle 2x,0,j\rangle$.

\noindent\emph{Stage $s+1=2j+1$:} Check if for $x<j$, $\langle x,e,j\rangle\in T$ and if so, set $\langle 2x+1,i, 0\rangle E \langle 2x+1,i, j\rangle$ for $0\leq i<k$.
\medskip

\noindent\emph{Verification:}
\noindent $p\in P,e\in S$: The construction at even stages, taking care of the $\Pi^0_3$ part, will not contribute any infinite equivalence classes and $\C_{f(p,e)}$ is unbounded from the beginning. To see that it has $k$ infinite equivalence classes just notice that by our initial assumption at the beginning of this section there is exactly one $x$ such that $\{ t: \langle x,e,t\rangle \in T\}$ is infinite. Thus, the construction at odd stages guarantees that the equivalence classes of elements $\langle 2x+1, i ,0 \rangle $ with $0\leq i < k$ become infinite in the limit.

\noindent $p\not \in P,e\in S$: Then there is an $x$ such that $\{ t: \langle x,p,r\rangle \in R\}$ is infinite. By construction the equivalence class of $\langle 2x,0,0\rangle$ is infinite and our strategy for $S$ builds $k$ infinite equivalence classes. Thus, $\C_{f(p,e)}$ has $k+1$ infinite equivalence classes and hence, $\C_{f(p,e)}$ is not bi-embeddable with $\A$.

\noindent $p\in P, e\not \in S$: Then for no $x$ the set $\{ t: \langle x,e,t\rangle \in T\}$ is infinite. So, by construction, no equivalence class of elements with an odd number in the left column will grow to be infinite and all equivalence classes with an even number in the left column have finite equivalence classes as $\{ r: \langle x,p,r\rangle \in R\}$ is finite for all $x$.

\noindent $p\not \in P,s\not\in S$: Immediate from the above cases.

To prove completeness for \cref{it:thmindexunb1}, at every stage, we apply the strategy described above for even stages.
\end{proof}
\begin{thm}
  The index set $\{e: \C_e\text{ is computably bi-embeddably categorical}\}$ is $\Sigma^0_2$ complete.
\end{thm}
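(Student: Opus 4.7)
The plan is to apply the characterization from \cref{thm:charbecat} for the upper bound, and reduce from the $\Sigma^0_2$-complete set $\Fin = \{e : W_e \text{ is finite}\}$ for the lower bound.

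For the upper bound, \cref{thm:charbecat} tells us that $\C_e$ is computably bi-embeddably categorical iff it has bounded character and finitely many infinite equivalence classes. I claim this is equivalent to
\[
\exists k, N\ \exists a_1,\dots,a_N\ \forall b\ \left(\bigvee_{i\leq N} b\, E^{\C_e}\, a_i\ \lor\ |[b]^{\C_e}|\leq k\right).
\]
Indeed, if $\C_e$ is bounded with bound $k$ and has representatives $a_1,\dots,a_N$ for its infinite classes, every element is either in one of the $[a_i]$ or lies in a finite class, hence of size $\leq k$. Conversely, from such witnesses $k, N, a_1,\dots,a_N$, any infinite class must contain some $a_i$ (since all other classes have size $\leq k$), so there are at most $N$ infinite classes, and the finite classes have size bounded by $\max(k,|[a_1]|,\dots,|[a_N]|)$ if we take only those $[a_i]$ that happen to be finite. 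By \cref{lem:compprop}(1) the matrix $|[b]|\leq k$ is $\Pi^0_1$ (uniformly in $e, k, b$), so the inner formula is $\Pi^0_1$ and the whole expression is $\Sigma^0_2$. Combined with the $\Pi^0_1$ condition that $\C_e$ is an equivalence structure, we get a $\Sigma^0_2$ index set.

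For $\Sigma^0_2$-hardness I will construct a computable function $g$ such that $\C_{g(e)}$ is computably bi-embeddably categorical iff $e\in\Fin$. The idea is that $\C_{g(e)}$ will consist of singletons together with, for each $x$ that appears in $W_e$, one equivalence class of size $x+1$; thus the character of $\C_{g(e)}$ is bounded iff $W_e$ is finite, and there are no infinite classes either way. To make this a total computable structure, I process elements of $\omega$ in order: at stage $t$ I consider only element $t$ and permanently decide its class. If there is a least $x\leq t$ with $x\in W_{e,t}$ and the currently built class intended for index $x$ has size $<x+1$, I add $t$ to that class; otherwise I make $t$ a singleton. Since placements are permanent, $E^{\C_{g(e)}}(u,v)$ is decidable by simulating the construction through stage $\max(u,v)$.

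The main things to verify are (i) that every $x\in W_e$ eventually has its associated class filled to size $x+1$ and (ii) that no class is grown beyond that. Point (ii) is immediate from the size cutoff. For (i), I argue inductively on the enumeration order of $W_e$: each smaller $x'\in W_e$ gets saturated in finitely many stages once it has entered, so priority eventually passes to $x$ and the infinite supply of fresh elements $t>$ current stage finishes the job. Hence if $W_e$ is finite then $\C_{g(e)}$ has bounded character and no infinite classes, so is computably bi-embeddably categorical by \cref{thm:charbecat}; and if $W_e$ is infinite then the sizes $x+1$ are unbounded, so $\C_{g(e)}$ has unbounded character and no infinite classes, and is not computably bi-embeddably categorical. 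The potentially delicate point, which I expect to be the main technical obstacle, is making sure the computation of $E^{\C_{g(e)}}$ is genuinely computable rather than merely c.e.; the permanent one-shot placement strategy at stage $t$ is what secures this.
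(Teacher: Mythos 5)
Your proposal is correct and follows essentially the same route as the paper: a $\Sigma^0_2$ definition of the index set extracted from the characterization in \cref{thm:charbecat}, and a reduction from $\Fin=\{e: W_e \text{ is finite}\}$ that adds one equivalence class of size $x+1$ for each $x$ entering $W_e$. The only differences are cosmetic (your defining formula exhibits representatives of the large classes rather than bounding the number of pairwise-inequivalent elements with large classes, and your one-element-per-stage placement replaces the paper's use of fresh elements $\langle x,s+i\rangle$), so no further comment is needed.
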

\begin{proof}
  Recall that an equivalence structure is computably bi-embeddably categorical if and only if it has bounded character and finitely many infinite equivalence classes. Thus the index set is definable by the following computable $\Sigma_2$ formula.
  \begin{multline*}\C_e \text{ is computably bi-embeddably categorical}\\ \LR \exists k \bigvee_{r\in \omega}( \forall x_1,\dots, x_{r+1} (\bigwedge_{1\leq i\leq r+1} |[x_i]|\geq k\ra \bigvee_{1\leq i<j \leq r+1} x_i E x_j)).\end{multline*}

  To see that it is hard consider the classical $\Sigma^0_2$ complete set $\Fin=\{ e: W_e \text{ is finite}\}.$ We build a computable function $f$ such that
\[\C_{f(e)} \text{ is computably bi-embeddably categorical} \LR W_e \text{ is finite}.\]

  Let $W_{e,s}$ be the computable approximation of $W_e$ at stage $s$. We construct $\C_{f(e)}$ in stages. At stage $0$ $\C_{f(e),0}$ has universe $\omega$ and the equivalence relation is the identity relation. Assume we have defined $\C_{f(e),s}$. To define $\C_{f(e),s+1}$ check for $x<s$ if $x\enters W_{e,s}$. If so declare $\langle x,s+i\rangle$ for $i\in 0,\dots, x$ to be equivalent. This finishes the construction. Let $\C_{f(e)}=\lim_s \C_{f(e),s}$. It follows directly from the construction that $\C_{f(e)}$ has bounded character if and only if $W_e$ is finite. This completes the proof.
\end{proof}
\begin{thm}\label{thm:indexset0'}
  The index set $\{ e: \C_e \text{ has degree of b.e.\ categoricity } \tdeg{0}'\}$ is $\Sigma^0_4$ complete.
\end{thm}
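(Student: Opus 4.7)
My plan is as follows. For the upper bound, I will combine \cref{thm:dgcat} with a direct complexity computation. By that theorem, membership in the index set is equivalent to $\C_e$ being a computable equivalence structure with unbounded character and finitely many infinite equivalence classes. Being a computable equivalence structure is $\Pi^0_1$; by \cref{lem:compprop}, $\Fin^{\C_e}$ is $\Sigma^0_2$ and $\Inf^{\C_e}$ is $\Pi^0_2$. The unbounded-character condition $\forall k \exists a\,(a \in \Fin^{\C_e} \land |[a]^{\C_e}| \geq k)$ will be $\Pi^0_3$, and ``finitely many infinite classes'', expressible as $\exists k \forall x_0, \ldots, x_k\,(\bigvee_i x_i \notin \Inf^{\C_e} \lor \bigvee_{i<j} x_i E x_j)$, will be $\Sigma^0_4$. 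Their conjunction is $\Sigma^0_4$.

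For the lower bound, I will reduce the $\Sigma^0_4$-complete set $\overline{\mathrm{Tot}^{\emptyset''}} = \{e : \phi_e^{\emptyset''} \text{ is not total}\}$; this set is $\Sigma^0_4$-complete because $\phi_e^{\emptyset''}(m)\downarrow$ is $\Sigma^0_3$ (via existential quantification over $\sigma \prec \emptyset''$) and so totality is $\Pi^0_4$-complete. Given $e$, I will build $\C_{f(e)}$ as a disjoint union of a \emph{base} part, which at stage $s$ adds a fresh equivalence class of size $s$ (guaranteeing unbounded character and contributing no infinite classes), and a \emph{cascade} of modules $M_0, M_1, \ldots$. Each module $M_n$ will be designed to contribute exactly one infinite equivalence class iff $Q_n(e) := \bigwedge_{m \leq n} \phi_e^{\emptyset''}(m)\downarrow$ holds, and none otherwise. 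Thus, if $\phi_e^{\emptyset''}$ is total every $Q_n$ holds and there are infinitely many infinite classes, while if $n_0$ is the least $m$ with $\phi_e^{\emptyset''}(m)\uparrow$, then exactly $n_0$ modules contribute and $\C_{f(e)}$ has $n_0$ (hence finitely many) infinite classes.

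To construct each $M_n$, I will use a $\Pi_2$ approximation $(\overline{\emptyset''}_s)_{s \in \omega}$ to $\overline{\emptyset''}$ in the spirit of \cref{thm:dgcatinfinite}. For every tuple $\bar{\sigma} = (\sigma_0, \ldots, \sigma_n)$ of binary strings with $\phi_e^{\sigma_m}(m)\downarrow$ for all $m \leq n$, I reserve a candidate class $C_{\bar{\sigma}}$ in $M_n$. The tuple will be called \emph{alive} at stage $s$ if no $\sigma_m$ has been refuted, i.e., no $x < |\sigma_m|$ with $\sigma_m(x) = 0$ has been observed to enter the approximation to $\emptyset''$; at each stage I grow the least alive tuple's class by a fresh element. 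If $Q_n(e)$ holds, then there exists a tuple $\bar{\sigma}$ with every $\sigma_m \prec \emptyset''$ whose components remain compatible indefinitely, and from some point on the least such tuple is the one that is grown, so $C_{\bar{\sigma}}$ becomes infinite; if $Q_n(e)$ fails, every tuple is eventually refuted on some coordinate, and each $C_{\bar{\sigma}}$ stays finite.

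The main obstacle will be verifying that the priority mechanism inside each $M_n$ produces exactly one infinite class when $Q_n(e)$ holds: this requires showing that after finitely many discards driven by the $\Pi_2$ oscillations of the approximation, the construction commits to growing a fixed candidate tuple, and that meanwhile the discards do not accidentally spawn a second infinite class. This is the essential adaptation of the discard/designate argument from the verification of \cref{thm:dgcatinfinite}, and the same style of $\Pi_2$-approximation analysis should carry it through.
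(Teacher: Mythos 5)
Your upper bound is correct and is essentially the paper's: by \cref{thm:dgcat} the index set is $\{e: \C_e$ is an equivalence structure with unbounded character and finitely many infinite classes$\}$, and your complexity count ($\Pi^0_1 \wedge \Pi^0_3 \wedge \Sigma^0_4$) matches what the paper gets with its transversal-based formula. Your lower bound has the right architecture (force infinitely many infinite classes exactly when a $\Pi^0_4$ condition holds, and adjoin a fixed unbounded base to pin the degree at $\tdeg{0}'$ in the complementary case), but the module mechanism as described has a genuine gap. Your aliveness test only polices the $0$-positions of the guessed strings: a tuple $\bar\sigma$ is discarded only when some $x$ with $\sigma_m(x)=0$ is seen in the approximation to $\emptyset''$. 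A candidate $\sigma_m$ with $\varphi_e^{\sigma_m}(m)\downarrow$ that is wrong only at a $1$-position (i.e.\ $\sigma_m(x)=1$ but $x\notin\emptyset''$) is \emph{never} refuted, stays alive forever, and if it is the least alive tuple cofinitely often its class is grown at almost every stage and becomes infinite. Concretely, if $\varphi_e^{\emptyset''}$ is nowhere defined but $\varphi_e^{\sigma}(n)\downarrow$ for, say, the all-ones strings $\sigma$, then every module $M_n$ produces a spurious infinite class, $\C_{f(e)}$ gets infinitely many infinite classes, and its degree of bi-embeddable categoricity is $\tdeg{0}''$ rather than $\tdeg{0}'$ --- so the reduction fails on the side you need ($e\in\ol{\mathrm{Tot}^{\emptyset''}}$). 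There is also a secondary issue in the other direction: with a $\Sigma^0_2$ approximation to $\emptyset''$, an $x\notin\emptyset''$ may still appear in the approximation at some stage, so a one-shot permanent refutation can kill the \emph{correct} tuple; you need the discard-and-redesignate mechanism, not a permanent kill.

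The missing ingredient is exactly the growth throttle from \cref{thm:dgcatinfinite}: there, the class of the current witness for $\sigma_i$ is only allowed to grow to $\min\{|\{t\le s: x\in\cotwo_t\}| : \sigma_i(x)=1\}$, so a guess that is wrong at a $1$-position has its class capped at a finite size, while wrong $0$-positions are handled by (infinitely repeated) discarding. If you graft that throttle onto your modules, the argument goes through, but note that the paper avoids this machinery entirely for the present theorem: it reuses the purely combinatorial reduction of \cref{thm:indexsetdeg02} (class $(x,y)$ is infinite iff a computable set of witnesses $\langle x,y,u,v,e\rangle$ is infinite, no $\Sigma_2/\Pi_2$ approximations needed), observes that the complement $\ol{P}$ of an arbitrary $\Pi^0_4$ set is an arbitrary $\Sigma^0_4$ set, and then sets $\C_{g(e)}=\C_{f(e)}\oplus\B$ for a fixed computable unbounded $\B$ to rule out the degree-$\tdeg{0}$ case. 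I would recommend either adopting that route or importing the throttle; as written, your verification that a module contributes no infinite class when $Q_n(e)$ fails does not hold.
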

To proof that \cref{thm:indexset0'} is $\Sigma^0_4$ hard we use the function that we construct in the proof of \cref{thm:indexsetdeg02}. We thus first state and prove this theorem. Recall that any two equivalence structures with infinitely many infinite equivalence classes are bi-embeddable. Furthermore, by \cref{thm:dgcatinfinite}, equivalence structures with infinitely many infinite classes have degree of bi-embeddable categoricity $\tdeg{0''}$.
\begin{thm}\label{thm:indexsetdeg02}
  The index set $\{ e: \C_e \text{ has degree of b.e.\ categoricity~} \tdeg{0}''\}$ is $\Pi^0_4$-complete.
\end{thm}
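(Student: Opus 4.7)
The plan is to use \cref{thm:dgcatinfinite} to identify $\{e : \C_e \text{ has degree of b.e.\ categoricity } \tdeg{0}''\}$ with the index set $I := \{e : \C_e \text{ has infinitely many infinite equivalence classes}\}$; it then suffices to show $I$ is $\Pi^0_4$-complete. For the upper bound, I would express the defining condition as
\[
\forall N\,\exists x_0, \dots, x_N \left(\bigwedge_{i\neq j} \neg x_i E x_j \wedge \bigwedge_i x_i \in \Inf^{\C_e}\right),
\]
and invoke \cref{lem:compprop} for the $\Pi^0_2$ bound on $\Inf^{\C_e}$; the resulting quantifier pattern is $\forall\exists\Pi^0_2 = \Pi^0_4$.

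For the hardness direction, I would reduce to $I$ the set $A := \{e : W_e \cap \{n : W_n \text{ is infinite}\} \text{ is infinite}\}$, which is $\Pi^0_4$-complete by a standard argument (the upper bound is analogous, and hardness comes from reducing the jump-complete set $\{e : W_e^{\emptyset''} \text{ is infinite}\}$ via the $\Sigma^0_3$-to-$\mathrm{Cof}$ reduction in its ``cofinite-or-finite'' variant). Given $e$, I will build $\C_{f(e)}$ on universe $\omega$, identified via a computable pairing with $\omega \times \omega$, by the following single-stage declaration: at stage $s+1$, for each $x \leq s$, declare $\langle x, s+1\rangle\, E^{\C_{f(e)}}\, \langle x, 0\rangle$ exactly when $x \in W_{e,s+1}$ and $|W_{x,s+1}| > |W_{x,s}|$; all other pairs remain non-equivalent.

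The verification will split into two parts. First, $E^{\C_{f(e)}}$ is computable: deciding equivalence of $\langle x, s\rangle$ and $\langle y, t\rangle$ reduces to checking $x = y$ together with finitely many computable tests at stages $s, t$. Second, the class $[\langle x, 0\rangle]^{\C_{f(e)}}$ gains exactly one new element each time $W_x$ grows at a stage after $x$ has entered $W_e$, and so it is infinite if and only if $x \in W_e$ and $W_x$ is infinite. Hence the number of infinite equivalence classes of $\C_{f(e)}$ equals $|W_e \cap \{n : W_n \text{ is infinite}\}|$, giving $e \in A$ iff $\C_{f(e)} \in I$ and completing the reduction.

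The main obstacle is ensuring the constructed equivalence relation is actually decidable rather than just c.e.: a naive approach that adds the ``$j$th'' element to each class once $W_x$ has reached size $j+1$ would render membership merely $\Sigma^0_1$. The indexing trick, pinning each potential new class member to the specific stage of its declaration, resolves this by making membership decided once and for all by a finite computable check at stage $s$.
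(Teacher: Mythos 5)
Your reduction of the statement to the index set $I=\{e:\C_e$ has infinitely many infinite classes$\}$, the $\Pi^0_4$ upper bound, and the reduction of $A=\{e: W_e\cap\Inf \text{ is infinite}\}$ to $I$ are all fine; in particular the stage-indexed construction of $\C_{f(e)}$ correctly makes $E^{\C_{f(e)}}$ decidable and gives exactly $|W_e\cap\Inf|$ many infinite classes. The genuine gap is the claim that $A$ is $\Pi^0_4$-hard. The ``cofinite-or-finite variant'' of the $\Sigma^0_3$-to-$\mathrm{Cof}$ reduction that you invoke does not exist: if there were a computable $f$ with $x\in S\Ra W_{f(x)}$ cofinite and $x\notin S\Ra W_{f(x)}$ finite, then $x\in S\LR f(x)\in\Inf$, so $S\leq_m\Inf$ and $S$ would be $\Pi^0_2$ --- impossible for a $\Sigma^0_3$-complete $S$. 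The finite/infinite dichotomy on $W_{f(x)}$ can encode exactly the $\Pi^0_2$ predicates and nothing more; the actual Yates representation gives cofinite versus coinfinite, and in the negative case $W_{f(x)}$ is in general still infinite, which would pollute $W_{h(e)}\cap\Inf$ with spurious members and break your intended reduction from $\{e: W_e^{\emptyset''}\text{ infinite}\}$.

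The set $A$ is in fact $\Pi^0_4$-complete, but proving this requires precisely the normal-form machinery the paper deploys directly on $I$: write $e\in P\LR\forall x\,R(x,e)$ with $R\in\Sigma^0_3$, normalize so that $\{x:R(x,e)\}$ is an initial segment of $\omega$ (hence $e\in P$ iff that set is infinite), and write $R(x,e)\LR\exists y\,Q(x,y,e)$ with $Q\in\Pi^0_2$ and a \emph{unique} witness $y$. Only then is $\{\langle x,y\rangle:Q(x,y,e)\}$ infinite iff $e\in P$, and only then can one set $W_{h(e)}=\{g(x,y,e):x,y\in\omega\}$ with $g$ injective and $W_{g(x,y,e)}$ infinite iff $Q(x,y,e)$ (using the uniform $\Pi^0_2$-completeness of $\Inf$) to conclude $|W_{h(e)}\cap\Inf|=|\{\langle x,y\rangle:Q(x,y,e)\}|$; without witness uniqueness a single true $R(x,e)$ with infinitely many witnesses already makes the intersection infinite. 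The paper's proof is exactly this argument carried out on the equivalence structure itself (one class per pair $\langle x,y\rangle$, with size driven by the $\Pi^0_2$ matrix under the unique-witness and initial-segment conventions stated at the start of \cref{sec:index}), so the detour through $A$ does not avoid that work --- it only relocates it. To repair your proof, either prove the $\Pi^0_4$-hardness of $A$ via these normal forms, or drop $A$ and run the reduction from an arbitrary $\Pi^0_4$ set directly as the paper does.
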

\begin{proof}
  By \cref{thm:dgcatinfinite} the index set of the equivalence structures having degree of bi-embeddability categoricity $\tdeg{0}''$ is the same as the index set
\[\{ e: \C_e \text{ has infinitely many infinite equivalence classes}\}.\]
  This index set is clearly $\Pi^0_4$. To see that it is complete consider a $\Pi^0_4$ set $P$, then there is a computable set $S$ such that
\[e\in P \LR \forall x\exists y \forall u \exists v\ \langle x,y,u,v, e\rangle \in S.\]
  We now build a computable function $f$ such that
\[\C_{f(e)}\text{ has infinitely many infinite equivalence classes}.\LR e\in P\]
  The construction is in stages; the universe of $\C_{f(e),0}$ is $\omega$ and $E^{\C_{f(e),0}}=\{ (x,x): x\in \omega\}$.
  Assume we have defined $\C_{f(e),s}$ and are at stage $s+1$ of the construction. The structure $\C_{f(e),s+1}$ extends $\C_{f(e),s}$ as follows. For each $x,y,u,v\leq s+1$ such that $\langle x,y,u,v,e\rangle \in S$, set $\langle x,y,0,0\rangle E^{\C_{f(e),s+1}} \langle x,y,u,v\rangle$. Then proceed to the next stage.

  The desired structure $\C_{f(e)}$ is the structure in the limit of the construction. If $e\in P$, then $\C_{f(e)}$ has infinitely many infinite equivalence classes as for every $x$ there is a $y$ such that $\{ \langle u,v \rangle : \langle x,y,u,v,e\rangle\in S\}$ is infinite and by construction the elements having the same first and second column are in the same equivalence class. Assume $e\not \in P$, then there exists an $x_0$ such that for no $y_0$ the above set is infinite; so no equivalence class of elements with $x_0$ in the left column will be infinite. Then by our assumption the same holds for all $x>x_0$. Hence, $\C_{f(e)}$ has only finitely many infinite equivalence classes.
\end{proof}

\begin{proof}[Proof of \cref{thm:indexset0'}]
  The index set is definable by the $\Sigma^0_4$ formula
  \begin{multline*}
    \C_e\text{ has degree of b.e.\ categoricity }\tdeg{0}' \\ \LR \forall k (\exists x ( x\in Fin^{\C_e} \land [x]\geq k ))\land \exists r(T_{\Inf^{\C_e}}\leq r).
  \end{multline*}
  where $T_{\Inf^{\C_e}}$ is a $\Pi^0_2$ transversal of $\Inf^{\C_e}$.
  For the hardness we use the strategy used in the proof of \cref{thm:indexsetdeg02}. There, given a $\Pi^0_4$ set $P$, we define a computable function $f$ such that
\[\C_{f(e)}\text{ has degree of b.e.\ categoricity }\tdeg{0}'' \LR e\in P.\]
  Clearly, $\ol P$ is $\Sigma^0_4$ and using the same function we have that
\[\C_{f(e)}\text{ has degree of b.e.\ categoricity } \tdeg{0}\text{ or } \tdeg{0}' \LR e\not \in P\]
  since if $e\not\in P$, $f$ produces an equivalence structure with finitely many infinite equivalence classes. Notice that $\C_{f(e)}$ need not have unbounded character and thus might have degree of categoricity $\tdeg{0}$. Therefore, define a computable unbounded equivalence structure $\B$ and a function $g$ such that
\[\C_{g(e)}=\C_{f(e)}\oplus \B.\]
  The function $g$ is clearly computable and
\[\C_{g(e)}\text{ has degree of b.e.\ categoricity }\tdeg{0}' \LR e\in \ol P.\]
  This proves that the index set is $\Sigma^0_4$ hard.
\end{proof}
 \printbibliography
\end{document}